\newtheorem{Theorem}{Theorem}[section]
\newtheorem{Proposition}[Theorem]{Proposition}
\newtheorem{Lemma}[Theorem]{Lemma}
\newtheorem{Corollary}[Theorem]{Corollary}
\theoremstyle{definition}
\theoremstyle{remark}
\numberwithin{equation}{section}
\newcommand{\Z}{{\mathbb Z}}
\newcommand{\R}{{\mathbb R}}
\newcommand{\C}{{\mathbb C}}
\newcommand{\T}{{\mathbb T}}
\newcommand{\SL}{{\textrm{\rm SL}}}
\newcommand{\PSL}{{\textrm{\rm PSL}}}
\newcommand{\tr}{{\textrm{\rm tr}\:}}
\renewcommand{\Im}{{\textrm{\rm Im}\:}}
\renewcommand{\Re}{{\textrm{\rm Re}\:}}
\newcommand{\cone}{{\textrm{\rm Cone}}}
\begin{document}

\title[Reflectionless canonical systems]{Topological properties of reflectionless canonical systems}

\author{Max Forester}

\address{Department of Mathematics\\
University of Oklahoma\\
Norman, OK 73019}

\email{mf@ou.edu}
\urladdr{www.math.ou.edu/$\sim$forester}
\author{Christian Remling}

\address{Department of Mathematics\\
University of Oklahoma\\
Norman, OK 73019}
\email{christian.remling@ou.edu}
\urladdr{www.math.ou.edu/$\sim$cremling}

\date{September 5, 2024}

\thanks{2020 {\it Mathematics Subject Classification.} 34L40 47B36 81Q10}

\keywords{Canonical system, reflectionless operator}

\thanks{MF was partially supported by Simons Foundation award 638026.}
\begin{abstract}
We study the topological properties of spaces of reflectionless canonical systems.
In this analysis, a key role is played by a natural action of the group $\PSL(2,\R)$ on these spaces.
\end{abstract}
\maketitle
\section{Introduction}
A \textit{canonical system }is a differential equation of the form
\begin{equation}
\label{can}
Ju'(x) = -zH(x)u(x) , \quad J=\begin{pmatrix} 0 & -1 \\ 1 & 0 \end{pmatrix} ,
\end{equation}
with a locally integrable coefficient function $H(x)\in\R^{2\times 2}$, $H(x)\ge 0$, $\tr H(x)=1$.
Canonical systems define self-adjoint relations and operators on the Hilbert spaces
\[
L^2_H(I) = \left\{ f: I\to\C^2 : \int_I f^*(x)H(x)f(x)\, dx < \infty \right\} .
\]
They are of fundamental importance in spectral theory because they may be used
to realize arbitrary spectral data \cite[Theorem 5.1]{Rembook}; much of the foundational work was done by de~Branges, from a rather different
point of view \cite{dB}.

A canonical system on $x\in I=\mathbb R$ is called \textit{reflectionless }on a Borel set $A\subseteq\mathbb R$ if
\begin{equation}
\label{refless}
m_+(t)= -\overline{m_-(t)}
\end{equation}
for (Lebesgue) almost every $t\in A$. Here, $m_{\pm}$ are the \textit{Titchmarsh-Weyl }$m$ \textit{functions }of the half line problems on
$x\in [0,\infty)$ and $x\in (-\infty, 0]$, respectively. These functions are key tools in the spectral analysis of \eqref{can}; please see
\cite[Chapter 3]{Rembook} for a detailed treatment. They may be defined as
\begin{equation}
\label{defm}
m_{\pm}(z) = \pm f_{\pm}(0,z) ,
\end{equation}
with $z\in\mathbb C^+=\{ z\in\mathbb C: \Im z >0 \}$ and $f_+=u$ denoting the
(unique, up to a factor) solution $f_+\in L_H^2(0,\infty)$ of \eqref{can}, and $f_-$ similarly denotes the solution that is square integrable on the left half line.
We also use the convenient convention of identifying a vector $f=(f_1,f_2)^t\in\mathbb C^2$, $f\not= 0$, with the point $f_1/f_2\in\mathbb C^{\infty}$
of the Riemann sphere. In particular, $m_{\pm}(z)\in\mathbb C^{\infty}$, and in fact the $m$ functions are generalized \textit{Herglotz functions, }that is,
they map the upper half plane $\mathbb C^+$ holomorphically back to $\overline{\mathbb C^+}$.
Such functions have boundary values $m(t)=\lim_{y\to 0+} m(t+iy)$ at almost all $t\in\mathbb R$, and we are referring to these in \eqref{refless}.

The $m$ function is constant and real, $m_+(z)=-\tan\alpha\in\mathbb R^{\infty}$, precisely when $H(x)\equiv P_{\alpha}$ is identically equal to the
projection $P_{\alpha}$ onto the vector $v_{\alpha}=(\cos\alpha,\sin\alpha)^t$ on $x>0$. If $H(x)$ is not of this type, then $m_+(z)$ is a genuine
Herglotz function, that is, it maps $\mathbb C^+$ holomorphically back to itself. Of course, similar remarks apply to $m_-$.

Note that the degenerate canonical systems $H(x)\equiv P_{\alpha}$, $x\in\mathbb R$,
are reflectionless on $A=\mathbb R$ according to our definition \eqref{refless} since their $m$ functions are given by
$m_{\pm}(z) = \mp\tan\alpha$. Even though they look uninteresting,
they will have a role to play in what follows. We introduce the notation
\[
\mathcal Z = \{ H(x)\equiv P_{\alpha} : 0\le\alpha<\pi \}
\]
for the space of these system. We frequently refer to the $H\in\mathcal Z$ as \textit{singular }canonical systems.
Notice that we can naturally identify $\mathcal Z$ with a circle.

Reflectionless canonical systems are important because they can be thought of as the basic building blocks of arbitrary operators with
some absolutely continuous spectrum; compare \cite{BLY}, \cite[Ch.\ 7]{Rembook}. Here, we study spaces of such systems. Let's introduce
\[
\mathcal R (A) =\{ H(x): H \textrm{ is reflectionless on }A\} .
\]
We will then be interested in the (much) smaller spaces
\[
\mathcal R_0(C) = \{ H\in\mathcal R (C): \sigma(H) \subseteq C \}
\]
for closed sets $C\subseteq\mathbb R$, and also in $\mathcal R_1(C)=\mathcal R_0(C)\setminus\mathcal Z$. By definition, $\sigma(H)=\emptyset$
if $H\in\mathcal Z$ (the operator associated with such an $H$ acts on the zero Hilbert space, so there is really no spectral theory to discuss, and this is a convention),
so $\mathcal Z\subseteq\mathcal R_0(C)$ for any $C\subseteq\mathbb R$, and the spaces $\mathcal R_0(C)$ and $\mathcal R_1(C)$ differ by the collection
of singular canonical systems.

The combination of conditions used to define $\mathcal R_0(C)$ is natural. For example, these spaces occur as the sets of limit points in
generalized Denisov-Rakhmanov type theorems \cite{Den}, \cite[Theorem 1.8]{RemAnn}. Unlike the unwieldy large spaces $\mathcal R(A)$, they can
be analyzed in considerable detail.

We will only discuss the most basic case where $C$ is a \textit{finite gap set,}
that is, a union of finitely many closed intervals. For such sets, all $H\in\mathcal R_1(C)$ will satisfy $\sigma(H)=C$ because reflectionless on $C$
operators have this set in their absolutely continuous spectrum. This is well known and will also be an easy by-product of what we'll do in Section 2.

The analysis naturally splits into three cases: $C$ can have two, one, or no unbounded components. Typical (simple) examples are $C=(-\infty,-1]\cup [1,\infty)$,
$C=[0,\infty)$, and $C=[-2,2]$, respectively. It will turn out that the last case is considerably more involved and richer than the first two.

Such sets also occur as the spectra of more classical differential and difference operators,
and then these three cases correspond to Dirac, Schr{\"o}dinger, and Jacobi operators, in this order. We will analyze how these specialized operators
sit inside the larger spaces $\mathcal R_0(C)$.

This issue is related to a natural group action on $\mathcal R_0(C)$ and $\mathcal R_1(C)$.
Recall that $\PSL(2,\R)$ is the quotient of $\SL(2,\R)=\{ A\in\R^{2\times 2}: \det A=1\}$ by $\{\pm 1\}$.
This group acts on the upper half plane by linear fractional transformations.
If we think of $\C^+$ as a subset of projective space and thus again (as in \eqref{defm})
represent points $z$ by vectors $v\in\C^2$, $v_1/v_2=z$, then an $A\in\PSL(2,\R)$ simply acts as $Av$,
that is, we apply the matrix $A$ to the vector $v$.

We then also obtain an action of $\PSL(2,\R)$ on canonical systems, by letting group
elements act on the half line $m$ functions pointwise, as follows:
\[
\pm m_{\pm}(z; A\cdot H) = A\cdot (\pm m_{\pm}(z; H))
\]
This action preserves spectra as well as the property of being reflectionless on a set \cite[Theorems 7.2, 7.9]{Rembook}. Moreover, $\mathcal Z$ is
clearly invariant under the action and thus so are $\mathcal R_0(C)$, $\mathcal R_1(C)$.

We will analyze this group action in all three settings, corresponding to sets $C$ of Dirac, Schr{\"o}dinger, and Jacobi types. In the first two cases, it is mostly
an additional gadget that is available if desired. However, in the Jacobi case, the group action becomes a valuable tool that will allow us
to give a very elegant treatment of an obstinate technical issue related to the presence of non-trivial fibers. This issue could be
analyzed directly, and that was in fact done in \cite{Remtop} for a different version of the same problem, but that analysis becomes very tedious.

We refer the reader to Sections 3--5 for precise formulations of our results, but let's at least attempt a quick summary of what we will prove here: the spaces
$\mathcal R_1(C)$, endowed with a natural metric that can be defined on arbitrary canonical systems, are homeomorphic
to a product of a disk with a torus $\mathbb D\times\mathbb T^N$, with each component of $C^c$ contributing one circle $\mathbb S^1$ to $\T^N$.
See Corollaries \ref{C3.1}(a), \ref{C4.1}(a), \ref{C5.1}.
Given a suitable parametrization of $\mathcal R_1(C)$, which we'll review in Section 2,
it will be rather straightforward to establish this in the Dirac and Schr{\"o}dinger cases; in the Jacobi case, which we'll deal with in Section 5,
the analysis becomes much more intricate because the two unbounded components of $C^c$ meet at the point $\infty$, while components
are always well separated in the other cases.

Moving on to the discussion of the group actions, we will in all cases identify $\mathcal R_1(C)$ with a product of the acting group and a space of
suitably chosen representatives of the orbits (= Theorems \ref{T3.1}, \ref{T4.1}, \ref{T5.2}).
This will be easiest in the Schr{\"o}dinger case, where we will show that $\mathcal R_1(C)\cong \PSL(2,\R)\times\mathcal S(C)$,
with the second factor denoting the Schr{\"o}dinger operators in $\mathcal R_1(C)$. The Dirac case poses no great challenges either, but in the Jacobi
case, we will have to make a very careful choice of representatives.

From this product structure we can then also deduce that the orbit space is homeomorphic to a torus $\T^{N-1}$
(= Corollaries \ref{C3.1}(b), \ref{C4.1}(b), \ref{C5.1}).

Finally, the space $\mathcal R_0(C)\supseteq\mathcal R_1(C)$ is a compactification of $\mathcal R_1(C)$, obtained by adding a circle. When $\mathcal R_1(C)$
is three-dimensional, we obtain the $3$ sphere $\mathcal R_0(C)\cong\mathbb S^3$,
and in all other cases, $\mathcal R_0(C)$ is not locally Euclidean at the points of the extra circle. See Theorems \ref{T3.2}, \ref{T4.2}, \ref{T5.3}.
\section{Parametrization of reflectionless canonical systems}
We adapt the method that was discussed in detail in \cite{PoltRem2,Remtop} to canonical systems; the original version dealt with Jacobi matrices.
These ideas go back to at least \cite{Craig}. We will focus mostly on the new aspects and refer the reader to \cite{PoltRem2,Remtop} for further details
on some of the more routine steps.

Given an $H\in\mathcal R(C)$, let
\begin{equation}
\label{h(z)}
h(z) = m_+(z)+m_-(z) .
\end{equation}
We also assume for now that $H\notin\mathcal Z$, so at least one of $m_{\pm}$ is a genuine Herglotz function, not a constant $a\in\R^{\infty}$.
But then in fact both of $m_{\pm}$ are genuine Herglotz functions (or else $H$ could not be reflectionless), and thus so is $h(z)$.

Next, from the condition that $H$ is reflectionless on $C$, we obtain $\Re h(t) =0$ for almost every $t\in C$. Thus the \textit{Krein function}
\[
\xi(t) = \frac{1}{\pi} \lim_{y\to 0+} \Im \log h(t+iy)
\]
satisfies $\xi(t)=1/2$ almost everywhere on $t\in C$. Recall here that a Herglotz function $F(z)$ has a holomorphic logarithm $\log F(z)$,
which is a Herglotz function itself if we take the logarithm with imaginary part in $(0,\pi)$. Moreover, since $\Im\log F(z)$ is bounded, the measure from
the Herglotz representation of $\log F(z)$ is purely absolutely continuous. This gives us an alternative interpretation of $\xi$ as the density
of the measure representing $\log h(z)$. In particular, we can recover $\log h(z)$ or, equivalently, $h(z)$ itself, from $\xi$, up to a constant.
This can be done explicitly, using the \textit{Herglotz representation }formula for $\log h(z)$; this is also sometimes referred to as the
exponential Herglotz representation of $h(z)$. We have
\begin{equation}
\label{2.3}
h(z) = D \exp \left( \int_{-\infty}^{\infty} \left( \frac{1}{t-z} - \frac{t}{t^2+1}\right) \xi(t)\, dt \right) \equiv D h_0(z) ,
\end{equation}
for some $D>0$.

If we only knew that $H\in\mathcal R(C)$, then any measurable $0\le\xi\le 1$ satisfying $\xi =1/2$ on $C$ would be a possible Krein function.
We are interested in the much stronger condition $H\in\mathcal R_1(C)$, that is, $H$ is not only reflectionless on $C$,
but we also assumed that there is no spectrum outside this set.
This imposes strong additional restrictions on $\xi$. To derive these, consider the Herglotz function $g=-1/h$
and write down its Herglotz representation. We have
\[
\frac{-1}{h(z)} = a+bz + \int_{-\infty}^{\infty} \left( \frac{1}{t-z} - \frac{t}{t^2+1}\right) d\rho(t)
\]
for some Borel measure $\rho$ satisfying $\int d\rho(t)/(1+t^2)<\infty$ and $a\in\R$, $b\ge 0$. In fact, $\rho$ is a spectral measure of $H$,
and this follows from the usual way of setting up a spectral representation of the whole line problem; see \cite[eqn.\ (3.17)]{Rembook}.

In particular, since by assumption $\sigma(H)\subseteq C$, we have $\rho(C^c)=0$, and thus
the function $g=-1/h$ has a holomorphic continuation through each component $(c,d)\subseteq C^c$. Moreover, $g(x)\in\mathbb R$ and $g'(x)>0$ there.
It follows that $g(x)$ changes its sign at most once on each such interval $(c,d)$, and if there is a sign change as we increase $x$,
it can only be from negative to positive values.

Let's rephrase this in terms of the Krein function $\xi$ of $h(z)$: on each component $(c,d)$ of $C^c$ (``gap''), the Krein function is of the form
\[
\xi(t) = \chi_{(\mu ,d)}(t) ,
\]
for some $c\le\mu\le d$. Since $\xi=1/2$ on $C$, as we observed earlier, the parameters $\mu_j$, one for each gap, give a complete description of $\xi(t)$
and thus also of $h(z)$, up to the multiplicative constant $D>0$, which we keep as an additional parameter.

This is actually a somewhat subtle technical point. Depending on the precise shape of the set $C$, we may want to incorporate a certain $\mu$ dependent factor
in $D$ from \eqref{2.3} and redefine $h_0$ accordingly. This will ensure that the singular canonical systems $H\in\mathcal Z$ fit nicely into our parameter space.
For now, we focus on the simplest case of a $C$ with no unbounded components in its complement, for which these issues are absent.
(We will deal with them when they reappear, in Sections 4, 5.)
So for the remainder of this section, we assume that
\begin{equation}
\label{CDirac}
C = \mathbb R \setminus \bigcup_{j=1}^N (c_j,d_j), \quad\quad c_1<d_1<c_2<\ldots < d_N .
\end{equation}
This will allow us to see the whole procedure in its simplest form, without currently unnecessary technical complications.

The integral from \eqref{2.3} can now be done explicitly. We obtain
\begin{equation}
\label{2.4}
h_0(z) = 2i \prod_{j=1}^N \frac{\sqrt{(c_j-z)(d_j-z)}}{\mu_j-z} ;
\end{equation}
the square root is determined by the requirement that $\Im h_0(z)>0$. Alternatively, we can prove \eqref{2.4} by confirming that the
right-hand side defines a Herglotz function that has the correct arguments (= Krein function) on the real line. As we already observed in the context of
\eqref{2.3}, that still leaves a multiplicative constant undetermined. Our choice of a factor of $2$ is natural because then $h_0$ corresponds to a
classical Dirac operator, but this detail is actually irrelevant for the purposes of this section.

We now return to the basic issue of parametrizing $\mathcal R_1(C)$. So far, we have introduced the parameters $\mu_j\in [c_j,d_j]$, $D>0$,
but these only determine $h(z)$, and this function does not normally determine the canonical system $H$, except in very specialized situations. Rather, the pairs
$(m_-,m_+)$ of half line $m$ functions are in one-to-one correspondence to the whole line canonical systems $H(x)$, $x\in\mathbb R$ \cite[Ch.\ 5]{Rembook}.
So we must return to \eqref{h(z)} and analyze how $h(z)$ from \eqref{2.4} can be split into two half line $m$ functions $m_{\pm}$.

We now abandon the exponential Herglotz representation of $h(z)$ and its Krein function and use the traditional representations instead. There is a small choice to make
about how exactly to incorporate $D$ from \eqref{2.3}, and it will be convenient to proceed as follows. Write
\begin{align}
\label{2.31}
h_0(z) & = A + \int_{\mathbb R^{\infty}} \frac{1+tz}{t-z}\, d\nu(t) ,\\
\label{2.5}
m_{\pm}(z) & = A_{\pm} + D \int_{\mathbb R^{\infty}} \frac{1+tz}{t-z}\, d\nu_{\pm}(t) .
\end{align}
The data for $h_0(z)$, namely $A\in\mathbb R$ and the finite measure $\nu$, are in principle available to us
since we have $h_0(z)$ from the $\mu_j$ via \eqref{2.4}. We are using a slightly different version of the Herglotz representation formula here: instead of writing
\[
h_0(z) = A +Bz + \int_{-\infty}^{\infty} \left( \frac{1}{t-z} -\frac{t}{t^2+1} \right)\, d\rho(t) , \quad\int_{-\infty}^{\infty} \frac{d\rho(t)}{t^2+1}<\infty ,
\]
as we did above, we now represent $h_0$ by \eqref{2.31}. We can easily go back and forth between these two versions. For example, we obtain \eqref{2.31}
by letting $d\nu=B\delta_{\infty}+d\rho(t)/(t^2+1)$. Frequently \eqref{2.31} is more convenient to work with
because $\nu$ is a finite Borel measure on the compact space $\R^{\infty}$.

Our task is to find all $A_{\pm}$, $\nu_{\pm}$ that make \eqref{h(z)} happen and produce an $H\in\mathcal R_1(C)$.
Of course, by the uniqueness of Herglotz representations, what \eqref{h(z)} is asking for is simply that $A_-+A_+=DA$, $\nu_-+\nu_+=\nu$.

To split the measure $\nu$ in this fashion, we remind ourselves of its basic structure: For the finite gap sets $C$ considered here,
it's easy to see that $\nu$ is purely absolutely continuous on $C$, with density
\[
\chi_C(t)\, d\nu(t)=\chi_C(t)\,\frac{\Im h_0(t)\, dt}{\pi(1+t^2)} .
\]
There is nothing to choose about this part when we split $\nu$:
Condition \eqref{refless} forces $\Im m_-=\Im m_+$ on $C$, so $\chi_C\, d\nu_{\pm} = (1/2)\chi_C\, d\nu$.

On $C^c$, the measure $\nu$ has a point mass at each jump $\mu_j\not= a_j, b_j$, and $\nu$ does not give weight to other sets.
This follows most conveniently from a standard criterion for point masses in terms of the Krein function \cite[pg.\ 201]{MPut}.
In other words,
\[
d\nu (t) = \sum_j w_j \delta_{\mu_j} + \chi_C(t)\frac{h_0(t)\, dt}{\pi (1+t^2)} ,
\]
with $w_j>0$ precisely when $\mu_j\not= a_j, b_j$. These point masses may be computed as
\begin{equation}
\label{2.66}
w_j = \frac{-i}{1+\mu_j^2}\lim_{y\to 0+} yh_0(\mu_j+iy) .
\end{equation}

Now for any given point mass $w\delta_{\mu}$, any of the choices $\nu_+=s w\delta_{\mu}+\ldots$, $0\le s\le 1$ will satisfy
\eqref{h(z)}. However, a closer look reveals that only $s=0,1$ are compatible with the requirement $\sigma(H)\subseteq C$; a choice
of $s\in (0,1)$ would produce an eigenvalue at $\mu\notin C$. We will not give the details of this argument. Instead, we refer the reader to
\cite{PoltRem2} and \cite[Section 3.7]{Rembook}.

What we just discussed is most usefully stated in terms of the half line $m$ function $m_+$. We have the formula
\begin{equation}
\label{2.11}
m_+(z) = A_++D \left( \frac{1}{2}(h_0(z)-A) + \sum_{j=1}^N \frac{2s_j-1}{2} w_j \frac{1+\mu_j z}{\mu_j-z} \right) .
\end{equation}
A \textit{reflectionless }canonical system is already determined by $m_+$ only. See \cite[Theorem 7.9(b)]{Rembook}, but perhaps also observe
that the statement is unsurprising since \eqref{refless} provides enough information about $m_-$ to reconstruct this function from $m_+$.
So for the purposes of obtaining a parametrization, we can temporarily forget about the intermediate steps and summarize by saying that
the parameters $\mu_j\in [c_j,d_j]$, $s_j=0,1$, $A_+\in\R$, $D>0$ determine $m_+$ and thus also the canonical system $H\in\mathcal R_1(C)$.
Note that $A\in\R$ and the $w_j\ge 0$ are not independent parameters. Rather, we find these quantities from the $\mu_j$ via $h_0(z)$ and \eqref{2.31}, \eqref{2.66}.

We are ready to put on the finishing touches.
We combine $s_j\in \{ 0,1\}$ and $\mu_j\in [c_j,d_j]$ into a single parameter $\widehat{\mu}_j = (\mu_j,s_j)$.
Recall that $s_j$ becomes irrelevant when $\mu_j=c_j$ or $\mu_j=d_j$ because then $\nu$ does not have a point mass at $\mu_j$ that
needs to assigned to $\nu_-$ or $\nu_+$ or, to say the same thing more formally, because $w_j=0$ then. Thus we can naturally
think of each $\widehat{\mu}_j$ as coming
from a circle (two copies of $[c_j,d_j]$, glued together at the endpoints). Not only is this convenient for the book-keeping, but, much more importantly,
it will also turn out that the topology suggested by this procedure is the right one. Let's review one more time how these
parameters work in \eqref{2.11}: The $\widehat{\mu}_j$ determine the function in parentheses as well as $A\in\mathbb R$; actually,
this latter quantity only depends on the $\mu_j$ and not on the $s_j$, as does $h_0(z)$. To completely specify $m_+$, we then need the additional parameters
$D>0$ and $A_+\in\mathbb R$.

Let's add some precision to our (mostly implicit, so far) claims. Given an $H\in\mathcal R_1(C)$, we have introduced parameters
$\widehat{\mu}_j$, $D>0$, $A_+\in\mathbb R$. These are determined by $H$, and, conversely, they may be used recover $H\in\mathcal R_1(C)$ via
\eqref{2.11}.
In fact, any such set of parameters corresponds to a unique $H\in\mathcal R_1(C)$. This last part we did not discuss explicitly, but it is clear how to proceed:
one simply constructs $m_{\pm}$ from the parameters, using the recipes given, and then checks that these correspond to a unique $H\in\mathcal R_1(C)$.
We have set up a bijection between $\mathcal R_1(C)$ and the parameter space $\{(A_+,D,\widehat{\mu}_j)\}$.

Finally, we introduce topologies. The space of trace normed canonical systems becomes a compact metric space when endowed with a natural metric,
which is discussed in detail in \cite[Section 5.2]{Rembook}. More importantly for us here, the one-to-one correspondence $H\mapsto (m_-,m_+)$
between canonical systems and pairs of generalized Herglotz functions
becomes a homeomorphism if we equip the space of Herglotz functions with the metric
\[
d(F,G) = \max_{|z-2i|\le 1} \delta( F(z), G(z)) ;
\]
see \cite[Corollary 5.8]{Rembook}.
Convergence in $d$ is equivalent to locally uniform convergence with respect to the spherical metric $\delta$, with $\mathbb C^+$ thought of as a subset of the
Riemann sphere $\C^{\infty}\cong\mathbb S^2$.
Note that thanks to a normal families argument, a single compact set $|z-2i|\le 1$ with non-empty interior is sufficient to control all the others.
So it is not necessary to exhaust $\C^+$ by a sequence of increasing compact sets, which would otherwise have been the standard procedure.

We already mentioned the key fact that reflectionless systems are determined
by their half line restrictions \cite[Theorem 7.9(b)]{Rembook}, and $m_-$ can be reconstructed from $m_+$. Moreover,
the induced map $m_+\mapsto m_-$, defined on the compact space of $m$ functions $m_+(z;H)$ with $H\in\mathcal R(C)$, is continuous.
As a consequence, we can also measure the distance between
two \textit{reflectionless }systems by only computing $d(m_+^{(1)}, m_+^{(2)})$, and we still obtain the same topology.

Recall that the singular canonical systems $H\in\mathcal Z$ correspond to the constant $m$ functions $m_+(z)=-m_-(z)\equiv a\in\mathbb R^{\infty}$,
so $\mathcal Z$ is homeomorphic to a circle. To conveniently attach this circle to our parameter space, we combine $Z=A_++iD\in\mathbb C^+$
into a single parameter.
\begin{Proposition}
\label{P2.1}
Let $H_n\in\mathcal R_1(C)$, with parameters $Z_n$, $\widehat{\mu}^{(n)}_j$. Denote the singular canonical system with $\pm m_{\pm}(z)\equiv a$,
$a\in\mathbb R^{\infty}$, by $K_a\in\mathcal Z$. Then $H_n\to K_a$ if and only if $\delta(Z_n,a)\to 0$.
\end{Proposition}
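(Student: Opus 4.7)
The plan is to read everything off from the parametrization formula \eqref{2.11}. Writing $Z_n = A_+^{(n)} + iD^{(n)}$ and
\[
B(z;\widehat{\mu}) = \tfrac{1}{2}(h_0(z) - A) + \sum_{j=1}^N \tfrac{2s_j-1}{2}\, w_j\, \frac{1+\mu_j z}{\mu_j-z},
\]
we have $m_+^{(n)}(z) = A_+^{(n)} + D^{(n)} B(z;\widehat{\mu}^{(n)})$, so the claim reduces to a statement about how these affine-in-$(A_+,D)$ expressions behave as the scalar data converge. Recall that by the discussion at the end of Section 2, convergence of canonical systems in $\mathcal R_1(C)\cup\mathcal Z$ is the same as convergence of $m_+$ in the metric $d$, i.e.\ locally uniform convergence in the spherical metric.

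The analytic heart of the argument is a uniform estimate: for any compact $K\subset\C^+$, there exist $M, \varepsilon > 0$ such that $|B(z;\widehat{\mu})| \leq M$ and $\Im B(z;\widehat{\mu}) \geq \varepsilon$ for all $z\in K$ and all admissible $\widehat{\mu}\in\T^N$. The lower bound is the only non-routine part; it follows by rewriting
\[
\Im B(z;\widehat{\mu}) = \frac{1}{2\pi}\int_C \frac{y\, \Im h_0(t)}{|t-z|^2}\, dt + \sum_{j} s_j w_j\, \frac{y(1+\mu_j^2)}{|\mu_j-z|^2},
\]
a sum of non-negative terms whose leading integral is strictly positive on $\C^+$. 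Joint continuity of $B(z;\widehat{\mu})$ on $K\times\T^N$ --- which uses that $s_jw_j$ is continuous across the gluing points $\mu_j=c_j,d_j$ since $w_j$ vanishes there --- and compactness then yield the uniform positive lower bound.

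For $(\Leftarrow)$: if $a\in\R$, then $A_+^{(n)}\to a$ and $D^{(n)}\to 0$, so the upper bound on $B$ gives $m_+^{(n)}\to a$ uniformly on $K$. If $a=\infty$, one splits into cases on $D^{(n)}$: along a subsequence with $D^{(n)}\to\infty$, the lower bound gives $\Im m_+^{(n)}\geq\varepsilon D^{(n)}\to\infty$, and along a subsequence with $D^{(n)}$ bounded, $|A_+^{(n)}|\to\infty$ dominates the bounded term $D^{(n)}B$. In all cases $m_+^{(n)}(z)\to a$ spherically, uniformly on compacts, giving $H_n\to K_a$. For $(\Rightarrow)$: pass to a subsequence along which $\widehat{\mu}^{(n_k)}\to\widehat{\mu}'\in\T^N$ and $Z_{n_k}\to a'$ in the Riemann sphere. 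If $\Im a'>0$, then $m_+^{(n_k)}(z)\to A_+' + D'B(z;\widehat{\mu}')$, a function with strictly positive imaginary part everywhere on $\C^+$, contradicting $m_+^{(n_k)}\to a\in\R^\infty$; hence $a'\in\R^\infty$, and the $(\Leftarrow)$ direction applied to this subsequence forces $a'=a$. Since every subsequence of $Z_n$ has a sub-subsequence converging spherically to $a$, so does the whole sequence.

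The main obstacle is precisely the uniform positive lower bound on $\Im B$: without it, cancellation between the signed coefficients $(2s_j-1)/2$ could plausibly keep $\Im B$ arbitrarily small, and then $D^{(n)}\to\infty$ would not obviously push $m_+^{(n)}$ to $\infty$. The non-negative decomposition above together with joint continuity on the compact space $\T^N$ --- which is exactly why the authors insisted on gluing $(\mu_j,s_j)$ into a circle in the first place --- is what rules this out.
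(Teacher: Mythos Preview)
Your proof is correct and follows essentially the same strategy as the paper's: both hinge on uniform upper and lower bounds for the ``slope'' part of $m_+$, obtained from the absolutely continuous piece of $\nu_+$ on $C$, followed by a subsequence argument. The paper phrases the key estimate as $0<c_1\le\nu_+(\R^\infty)\le c_2$ (which is exactly your bound on $\Im B$ specialized to $z=i$, since $\Im B(i)=\nu_+(\R^\infty)$), and then evaluates only at $z=i$ rather than on a general compact $K$.

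The one substantive difference is in which compactness is invoked: the paper extracts convergent subsequences in the \emph{target} space using the (externally cited) compactness of $\mathcal R_0(C)$, whereas you extract them in the \emph{parameter} space $\T^N\times\overline{\C^+}$ and appeal to the joint continuity of $B(z;\widehat{\mu})$. Your route is slightly more self-contained, since it does not rely on knowing in advance that $\mathcal R_0(C)$ is compact; the paper's route is a bit shorter because it sidesteps the explicit verification of joint continuity of $B$ across the gluing points of the circles.
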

\begin{proof}
From \eqref{2.5}, we have $m_+(i)=A_++iD\nu_+(\mathbb R^{\infty})$. Recall that $\nu_+$ only depends on the parameters $\widehat{\mu}_j$ and not
on $A_+,D$. Moreover, \eqref{2.4} implies that we have uniform bounds of the form
\begin{equation}
\label{2.41}
0<c_1\le \nu_+(\R^{\infty})\le \nu(\mathbb R^{\infty})\le c_2 ,
\end{equation}
with $c_1,c_2$ independent of the $\widehat{\mu}_j$. To obtain the lower bound, we can simply estimate $\nu_+$ by its absolutely continuous part,
which is supported by $C$ and has density (essentially) $h_0(x)$ there.

Assume first that $\delta(Z_n, a)\to 0$, $a\in\mathbb R$. Equivalently,
$A_+^{(n)}\to a$, $D_n\to 0$.
The space $\mathcal R_0(C)$ is compact
\cite[Proposition 1.4]{PoltRem2}, \cite[Theorem 7.11]{Rembook}. Thus $H_n$ always converges along suitable subsequences, to $H\in\mathcal R_0(C)$, say.
Using the upper bound from \eqref{2.41},
we conclude that the $m$ function of the limit point $H$ satisfies $m_+(i;H)=a$, but the only such Herglotz function is $m_+(z)\equiv a$, so $H=K_a$.
We have in fact shown that $K_a$ is the only possible limit point of $H_n$,
and thus the full sequence also converges to this limit, without the need of passing to a subsequence.

The same argument handles the case $\delta (Z_n,\infty)\to 0$. This assumption implies that $|m_+^{(n)}(i)|\to\infty$
because now $|A_+^{(n)}|+D_n\to\infty$, and we can use
the lower bound from \eqref{2.41} for those $n$ (if any) for which $|A_+^{(n)}|$ is not large.

Conversely, if $H_n\to K_a$, so $m_+^{(n)}(z)\to a$ locally uniformly, then we can similarly find a subsequence (written as the original sequence, for convenience)
such that $\delta (Z_n, Z)\to 0$, for some $Z\in\overline{\mathbb C^+}$. If $Z\not= a$, then $m_+^{(n)}(i)$ can not converge to $a$. This we see
by distinguishing the two cases $Z\in\C^+$, $Z\in\R^{\infty}$ and arguing as in the first part of this proof; in particular, we again use \eqref{2.41} when required.

As above, this argument has shown that every subsequence has a sub-subsequence along which $Z_n\to a$, so the original sequence converges
to this limit, as claimed.
\end{proof}
\section{Dirac case: two unbounded components}
We continue our discussion of spectra $C$ of the form \eqref{CDirac}. We call this the \textit{Dirac case }because $\mathcal R_1(C)$ for such $C$ contains
Dirac operators
\[
Dy = Jy'(x)+W(x)y(x) ,
\]
acting on $y\in L^2(\mathbb R; \mathbb C^2)$. More precisely, a variation of constants procedure lets us rewrite these Dirac equations $Dy=zy$ as canonical
systems, and this accounts for some of the members of $\mathcal R_1(C)$; see also \cite[Section 6.4]{Rembook}.

We denote by $\mathcal D$ the collection of canonical systems that are Dirac operators, in this sense.
It will also be convenient to abbreviate
\[
\mathcal D(C) = \mathcal R_1(C)\cap\mathcal D .
\]
Similar notation will be used in the other cases in Sections 4, 5 below.

If this all sounds a bit vague since we didn't give the details of the procedure
that rewrites a Dirac equation as a canonical system, then we refer the reader to a precise criterion for an $H\in\mathcal R_1(C)$ to belong to $\mathcal D$,
in terms of its $m$ functions $m_{\pm}(z;H)$, that will be given below.

Given the work of the previous section, it will now be rather straightforward to analyze the topology of $\mathcal R_1(C)$.
We denote the open unit disk by $\mathbb D =\{ z\in\C: |z|<1\}$ and the $N$-dimensional torus by $\mathbb T^N=\{ (z_1,\ldots , z_N): |z_j|=1\}$.

We also right away consider the action of the group $\textrm{PSL}(2,\mathbb R)$ on $\mathcal R_1(C)$ though this is actually not needed if we only want
to understand the topology of $\mathcal R_1(C)$. The subgroup $\textrm{\rm SO}(2)/\{\pm 1\}$ maps $\mathcal D$ back to itself;
this will be clear from the discussion below since these group elements fix $z=i$. Thus there will be many Dirac operators in an orbit when $N\ge 1$,
and since our main use of these groups is to move us around to a specialized operator, the following subgroup is more useful here:
\begin{equation}
\label{3.5}
G = \left\{ \begin{pmatrix} c & a/c \\ 0 & 1/c \end{pmatrix} : c>0, a\in\mathbb R \right\}
\end{equation}
\begin{Theorem}
\label{T3.1}
The action of $G$ on $\mathcal R_1(C)$ is fixed point free,
and every orbit $\{ g\cdot H: g\in G\}$ contains a unique Dirac operator $H_1\in\mathcal D$.

The ($G$ equivariant) map $G\times\mathcal D(C)\cong\mathcal R_1(C)$, $(g, H_1) \mapsto g\cdot H_1$, is a homeomorphism.
\end{Theorem}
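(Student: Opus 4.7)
The plan is to leverage the parametrization of $\mathcal R_1(C)$ from Proposition \ref{P2.1}, in which an $H\in\mathcal R_1(C)$ corresponds to parameters $(Z,\widehat{\mu}_j)$ with $Z=A_++iD\in\mathbb C^+$. The first step is to compute how $g=\begin{pmatrix} c & a/c \\ 0 & 1/c \end{pmatrix}\in G$ acts in these coordinates. As a linear fractional transformation, $g$ acts on the half plane by $m_+\mapsto c^2 m_++a$. Comparing with \eqref{2.11}, one sees that the summand depending on $\widehat{\mu}_j$ is multiplied by $c^2$ and absorbed into the new $D$, while $A_+$ transforms as $A_+\mapsto c^2 A_++a$. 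Thus $g$ acts on the parameters by $(Z,\widehat{\mu}_j)\mapsto (c^2Z+a,\widehat{\mu}_j)$, leaving the $\widehat{\mu}_j$ untouched.

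Once this is in hand, the fixed point free assertion is immediate: $c^2Z+a=Z$ for $Z\in\mathbb C^+$ forces $c=1$ and $a=0$, i.e.\ $g=e$. Moreover, the map $g\mapsto g\cdot i=a+ic^2$ is a diffeomorphism $G\to\mathbb C^+$, so $G$ acts simply transitively on each $\widehat{\mu}$-fiber of the parameter space. Hence orbits of $G$ in $\mathcal R_1(C)$ are precisely the fibers $\{\widehat{\mu}_j\}\times\mathbb C^+$.

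To identify the unique Dirac representative in each orbit, I would invoke the characterization of $\mathcal D$ in terms of $m_\pm$ promised in the excerpt (presumably a normalization of the form $m_+(i;H)=i$, consistent with the hint that $\mathrm{SO}(2)/\{\pm 1\}$ fixes $i$ and preserves $\mathcal D$). For any $H$, writing $m_+(i;H)=p+iq$ with $q>0$ (using that $H\notin\mathcal Z$ makes $m_+$ a genuine Herglotz function), the equation $c^2(p+iq)+a=i$ has the unique solution $c=q^{-1/2}$, $a=-p/q$, which gives the unique $g\in G$ with $g\cdot H\in\mathcal D$. This establishes both existence and uniqueness of $H_1\in\mathcal D(C)$ in each orbit.

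For the homeomorphism statement, continuity of $(g,H_1)\mapsto g\cdot H_1$ is built in, since the $\PSL(2,\R)$ action on pairs of $m$ functions is plainly continuous in the metric $d$ introduced in Section 2. For the inverse, the map $H\mapsto(g(H), g(H)^{-1}\cdot H)$ is continuous because $g(H)$ is an explicit continuous function of $m_+(i;H)$, and $H\mapsto m_+(i;H)$ is continuous by the results cited before Proposition \ref{P2.1}. The main obstacle is really only the Dirac criterion: once the precise condition on $m_\pm$ characterizing $\mathcal D$ is available and is shown to single out a unique value of $Z$ in each $\widehat{\mu}$-fiber, the rest of the argument is essentially a formal consequence of the simple transitivity of $G$ on $\mathbb C^+$.
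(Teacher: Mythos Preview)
Your overall strategy matches the paper's closely: compute that $g\in G$ acts on the parameters by $(Z,\widehat{\mu}_j)\mapsto(c^2Z+a,\widehat{\mu}_j)$, deduce that the action is free and that orbits are exactly the $Z$-fibers, and then locate the distinguished representative by solving for the unique $g$ that normalizes $m_+$ appropriately.

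The one genuine gap is your guessed Dirac criterion. The characterization of $\mathcal D$ is \emph{not} $m_+(i)=i$ but rather $m_{\pm}(\infty)=i$; this is the standard large-$z$ asymptotic condition for Dirac $m$ functions \cite{LevSar}, and it is the criterion the paper actually uses. In the Dirac case \eqref{CDirac}, the formulae \eqref{2.4}, \eqref{2.11} show that $m_{\pm}$ are holomorphic at $z=\infty$, with $h_0(\infty)=2i$ and hence $\Im m_+(\infty)=D>0$; moreover $m_+(\infty)+m_-(\infty)=Dh_0(\infty)=2iD$, so $m_+(\infty)=i$ already forces $m_-(\infty)=i$. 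Thus your solving step survives verbatim with $\infty$ in place of $i$: writing $m_+(\infty)=p+iq$, the equation $c^2(p+iq)+a=i$ has the unique solution $c=q^{-1/2}$, $a=-p/q$. Your condition $m_+(i)=i$ would also pick out a unique representative per orbit, but that representative is not a Dirac operator, so as stated the proposal does not prove the theorem.

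For the homeomorphism, the paper takes a slightly different route from yours. Rather than writing down the inverse explicitly, it first establishes (via Lemma \ref{L3.1} and the parametrization) that $\mathcal D(C)\cong\T^N$ and $\mathcal R_1(C)\cong\mathbb D\times\T^N$ are manifolds of the same dimension, and then invokes invariance of domain to conclude that the continuous bijection $(g,H_1)\mapsto g\cdot H_1$ is open. Your explicit-inverse argument also works once the correct criterion is in hand, since $m_+(\infty;H)$ depends continuously on $H$ through the parameters; this is arguably more direct.
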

Here we of course give $G\subseteq\mathbb R^{2\times 2}$ its natural (subspace) topology. Observe that then $G\cong \mathbb D$. In particular,
since the topology of $\mathcal D(C)\cong\T^N$ can be found easily, using the material of the previous section,
we also obtain the topology of the original space.
\begin{Corollary}
\label{C3.1}
(a) $\mathcal R_1(C)$ is homeomorphic to $\mathbb D\times \mathbb T^N$.

(b) The map $\mathcal R_1(C)/G\to\mathcal D(C)$ that sends an orbit to its unique representative in $\mathcal D(C)$ is a homeomorphism,
and $\mathcal D(C)\cong\T^N$.
\end{Corollary}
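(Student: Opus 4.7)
The plan is to derive Corollary \ref{C3.1} as a direct consequence of Theorem \ref{T3.1} by identifying its two factors topologically. The $G$-equivariant homeomorphism $G\times\mathcal D(C)\cong\mathcal R_1(C)$ provided by Theorem \ref{T3.1} reduces both parts of the corollary to the separate identifications $G\cong\mathbb D$ and $\mathcal D(C)\cong\T^N$.

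For $G\cong\mathbb D$, I would first observe that the parametrization sending the matrix in \eqref{3.5} to $(c,a)\in\R^+\times\R$ is a homeomorphism onto its image with the subspace topology from $\R^{2\times 2}$, and $\R^+\times\R\cong\C^+\cong\mathbb D$. For $\mathcal D(C)\cong\T^N$, I would use the parametrization from Section 2: the criterion for an $H\in\mathcal R_1(C)$ to belong to $\mathcal D$, to be established in the course of proving Theorem \ref{T3.1}, should pin down the pair $(A_+,D)$ to specific normalized values and leave only the circle-valued parameters $\widehat{\mu}_1,\ldots,\widehat{\mu}_N$ free. The resulting map $\T^N\to\mathcal D(C)$ is then a continuous bijection, with continuity following from the explicit formulas \eqref{2.4} and \eqref{2.11} for $m_+$ in terms of the parameters. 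Since $\T^N$ is compact and $\mathcal R_1(C)$ is Hausdorff (as a subspace of the metric space of canonical systems), this bijection is automatically a homeomorphism.

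Part (a) then follows by chaining $\mathcal R_1(C)\cong G\times\mathcal D(C)\cong\mathbb D\times\T^N$. For part (b), $G$ acts on $G\times\mathcal D(C)$ by left multiplication on the first factor (by the equivariance statement in Theorem \ref{T3.1}), so the quotient of the product is visibly $\mathcal D(C)$; the induced homeomorphism $\mathcal R_1(C)/G\cong\mathcal D(C)$ sends an orbit to its unique Dirac representative, which matches the map in the statement.

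The main subtlety will be continuity of $\T^N\to\mathcal D(C)$ at the boundary values $\mu_j\in\{c_j,d_j\}$, where the switching variable $s_j$ must genuinely disappear into the circle. This is handled by the fact that $w_j=0$ at those points, so the $s_j$-dependent term in \eqref{2.11} drops out; this vanishing is precisely the reason $\widehat{\mu}_j$ was set up to parametrize a circle (two copies of $[c_j,d_j]$ glued at the endpoints) in Section 2, so the verification reduces to a direct inspection of the formulas already in hand.
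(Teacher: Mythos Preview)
Your derivation is correct once Theorem \ref{T3.1} is in hand, but it reverses the paper's logical order, and this matters. In the paper, Corollary \ref{C3.1}(a) is proved \emph{first}, directly from the parametrization of Section 2: one builds the map $F:\overline{\mathbb D}\times\T^N\to\mathcal R_0(C)$, checks continuity (Lemma \ref{L3.1}), and then uses the automatic continuity of the inverse between compact metric spaces. Restricting to the open subset $\mathbb D\times\T^N$ gives (a). Only after this does the paper prove Theorem \ref{T3.1}, and the final step there---showing that $(g,H)\mapsto g\cdot H$ is a homeomorphism---invokes invariance of domain, which requires knowing that $\mathcal R_1(C)$ is a manifold. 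That manifold structure comes precisely from Corollary \ref{C3.1}(a). So if you import Theorem \ref{T3.1} as a black box and use the paper's proof of it, your argument for (a) becomes circular.

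Your route can be made honest by supplying an independent proof that the inverse of $(g,H)\mapsto g\cdot H$ is continuous (the paper remarks this is not hard), but you should flag that this is needed. The payoff of your approach is conceptual cleanliness: everything flows from the single product decomposition. The paper's approach buys something different: by going through the compactification $\mathcal R_0(C)$ and Lemma \ref{L3.1}, it simultaneously sets up the machinery for Theorem \ref{T3.2}. Your identification $\mathcal D(C)\cong\T^N$ via a continuous bijection from a compact space to a Hausdorff space, and your treatment of part (b) via equivariance, match the paper exactly.
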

Theorem \ref{T3.1} also delivers a homeomorphism between $\mathcal R_1(C)$ and $\mathbb D\times \mathbb T^N$.
As we'll see in the proof, this is closely related to but not identical
with the map from the previous section that sends an $H\in\mathcal R_1(C)$ to its
parameters $A_+, D, \widehat{\mu}_j$, and this latter map would perhaps have been the most natural choice.
\begin{proof}[Proof of Theorem \ref{T3.1}]
It was natural to list the statements in the order given, but in the argument, we will actually start with Corollary \ref{C3.1}(a).

The key tool will of course be the parametrization that was discussed in the previous section and the associated map that
sends an $H\in\mathcal R_1(C)$ to its parameters $Z=A_++iD$, $\widehat{\mu}_j$. As already discussed, we can think of $\widehat{\mu}_j=(\mu_j,s_j)$
as coming from a circle $\mathbb S^1=\{ z: |z|=1\}$ by mapping $z=e^{i\pi t}$, $-1\le t< 1$, to
\[
f(z) = \begin{cases} (c_j + t(d_j-c_j) , 1) & 0 < t < 1 \\
(c_j -t(d_j-c_j) , 0) & -1 < t < 0 \\
c_j & t=0 \\
d_j & t=-1
\end{cases} .
\]
Recall again that $s_j=0,1$ becomes irrelevant when $\mu_j=c_j$ or $=d_j$.
Similarly, we can of course identify $\mathbb C^+$ with $\mathbb D$, for example via the Cayley transform.

It will be technically convenient to first discuss the spaces $\mathcal R_0(C)$, which have the advantage of being compact,
and we'll need this extension anyway later on.
So we now consider the map $F:\overline{\mathbb D}\times\mathbb T^N\to\mathcal R_0(C)$. On $\mathbb D\times\mathbb T^N$,
it acts as just described: we interpret a point $(w,z)\in\mathbb D\times\T^N$ as a point $(Z,\widehat{\mu}_j)$ in parameter space,
using the maps just set up, and then we map this to a unique $H=F(w,z)\in\mathcal R_1(C)$. We then extend $F$ to the larger space
$\overline{\mathbb D}\times\T^N$ by identifying in the same way $(w,z)$, $|w|=1$, with the parameters $(a,\widehat{\mu}_j)$, $a\in\R^{\infty}$,
and then we send this simply to $F(w,z)=K_a\in\mathcal Z$, the singular canonical system with $m_+\equiv a$.
\begin{Lemma}
\label{L3.1}
The map $F$ induces a homeomorphism
\[
F_1: \overline{\mathbb D}\times\T^N/\!\!\sim\;\to\mathcal R_0(C) .
\]
The first space is the quotient space by the equivalence relation
\[
(w,z)\sim (w',z')\iff |w|=|w'|=1, \quad w=w' .
\]
\end{Lemma}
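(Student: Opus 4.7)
The plan is to verify four things in sequence: that $F$ factors through the quotient, that the induced map $F_1$ is a bijection, that $F$ is continuous, and then to apply a standard compactness argument to conclude.

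Factoring through $\sim$ is immediate from the construction: for $(w,z)$ with $|w|=1$, we set $F(w,z)=K_a$ where $a\in\R^{\infty}$ corresponds to $w$ under the identification $\partial\mathbb D\cong\R^{\infty}$, and this value does not depend on $z\in\T^N$. So $F$ is constant on equivalence classes and descends to $F_1$. For bijectivity, I would use the decomposition $\mathcal R_0(C)=\mathcal R_1(C)\sqcup\mathcal Z$ and treat the two pieces separately. On the open part $\mathbb D\times\T^N$, the parametrization established in Section~2 gives a bijection to $\mathcal R_1(C)$ via the correspondence $(w,z)\leftrightarrow (Z,\widehat{\mu}_j)$ and then \eqref{2.11}. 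On the quotient of the boundary, which is a single circle, the induced map $\R^{\infty}\to\mathcal Z$, $a\mapsto K_a$, is tautologically a bijection.

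Continuity of $F$ is where the real work lives. At interior points, continuity follows by direct inspection of \eqref{2.11}: for $z$ in a fixed disk about $2i$, $m_+(z)$ depends jointly continuously on $A_+,D$, and the pairs $(\mu_j,s_j)$, with the caveat that at $\mu_j\in\{c_j,d_j\}$ we have $w_j=0$ and $s_j$ becomes irrelevant. This is exactly the reason the parameters $\widehat{\mu}_j$ are glued into circles, and once this gluing is built in, continuity in $\widehat{\mu}_j\in\mathbb S^1$ is genuine. Since convergence in $d$ is local uniform convergence of $m_+$ on $|z-2i|\le 1$, which coincides with convergence of reflectionless canonical systems, this gives continuity at interior points. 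At boundary points of $\overline{\mathbb D}\times\T^N$ the target is some $K_a\in\mathcal Z$, and continuity here is furnished precisely by Proposition \ref{P2.1} for sequences approaching from the interior, combined with the trivial continuity of $a\mapsto K_a$ (since $m_{\pm}(z;K_a)\equiv\mp a$) for sequences already on the boundary.

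With these in hand the endgame is standard: the domain $\overline{\mathbb D}\times\T^N/{\sim}$ is compact, being a quotient of a compact space, while $\mathcal R_0(C)$ is Hausdorff as a metric space, so the continuous bijection $F_1$ is automatically a homeomorphism. The main obstacle is the boundary continuity, but that is exactly the content of Proposition \ref{P2.1}; everything else is bookkeeping around the parametrization and the circle structure of the $\widehat{\mu}_j$.
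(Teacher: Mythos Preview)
Your proof is correct and follows essentially the same approach as the paper: establish that $F$ descends to a well-defined bijection $F_1$, verify continuity of $F$ at interior points by inspecting the explicit formulae (in particular using that $w_j\to 0$ as $\mu_j$ approaches a gap endpoint), invoke Proposition~\ref{P2.1} for continuity at boundary points, and finish with the compact-to-Hausdorff automatic homeomorphism argument. One small slip: the sign in $m_{\pm}(z;K_a)\equiv\mp a$ is reversed (it should be $\pm a$), but this is immaterial to the continuity claim you are making there.
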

\begin{proof}
Notice first of all that $F$ is constant on equivalence classes, so we do obtain a well defined map $F_1$ on the quotient. In fact, it is clear
that $F_1$ is bijective, and since we are mapping
between compact metric spaces, it suffices to check continuity in one direction. (But it would also not be hard to check continuity of both $F_1$ and its inverse
separately.) We focus on $F_1: \overline{\mathbb D}\times\T^N/\!\!\sim\;\to\mathcal R_0(C)$ itself. This map was induced by
$F:\overline{\mathbb D}\times\T^N\to\mathcal R_0(C)$, so its continuity is equivalent to the continuity of $F$. At a point $(w,z)\in\mathbb D\times\T^N$,
what we need can be rephrased as the continuous dependence of an $H\in\mathcal R_1(C)$ on its parameters, and this information is easily extracted from the formulae
of Section 2, especially \eqref{2.4}, \eqref{2.11}. We also need \eqref{2.66}, which will guarantee that $w_j=w_j(\{\mu_k \})\to 0$ if
$\mu_j\to c_j$ or $\mu_j\to d_j$. This in turn makes sure that changing a $s_j$ will only have a small effect if the corresponding $\mu_j$ is close
to an endpoint of its gap.

If $|w|=1$, then the continuity of $F$ at $(w,z)$ follows at once from Proposition \ref{P2.1}.
\end{proof}
Since this quotient space contains $\mathbb D\times\mathbb T^N$ as a subspace, Lemma \ref{L3.1} also establishes the continuity of the map
and its inverse between this space and $\mathcal R_1(C)$, and we have now proved Corollary \ref{C3.1}(a).

Next, notice that if $g\in G$ is as in \eqref{3.5}, then the induced linear fractional transformation is given by
$g\cdot w = c^2 w+ a$. This shows, first of all, that the action on $\mathcal R_1(C)$ is fixed point free.
Indeed, if $g\cdot m_+(z)=m_+(z)$, then we can specialize to $z=i$, say, and since $\Im m_+(i)>0$, we deduce that $c=1$, $a=0$, that is, $g=1$.

Furthermore, a $G$ orbit $\{ g\cdot m_+\}$
contains exactly those $m_+$ that assume all possible values of the parameters $A_+,D$ while the $\widehat{\mu}_j$ are not moved by
the action of $G$.
A look at \eqref{2.4} and \eqref{2.11} shows that $m_{\pm}(z)$ are holomorphic near $z=\infty$, and in this situation, the $m$ functions
correspond to a Dirac operator precisely when $m_{\pm}(\infty)=i$ \cite{LevSar}.
It is now straightforward to check that indeed each $G$ orbit contains a unique $H\in\mathcal D$.

Obviously, the map $(g,H)\mapsto g\cdot H$ is continuous. We just showed that it is also bijective,
and both spaces involved are manifolds, so the continuity of the inverse follows from invariance of domain \cite[Proposition IV.7.4]{Dold}.
Here, we use the machinery of Section 2 in a simplified version to identify the topology of $\mathcal D(C)\cong\T^N$.
More specifically, $\mathcal D(C)$ is parametrized by just the $\widehat{\mu}_j$. For an $H\in\mathcal D(C)$, we have $D=1$,
and $A_+$ is also determined by the $\widehat{\mu}_j$ through the requirement that $m_+(\infty)=i$. So we have a bijection
between the compact metric spaces $\mathcal D(C)$ and $\T^N$, and inspection of the formulae of Section 2 will confirm that
this map is a homeomorphism. See also the corresponding discussion in the proof of Lemma \ref{L3.1}.
\end{proof}
\begin{Theorem}
\label{T3.2}
(a) $\mathcal R_0(C)$ is homeomorphic to $\mathbb S^3$ if $N=1$.

(b) $\mathcal R_0(C)$ is not a manifold if $N\ge 2$. More precisely, a point $H\in\mathcal R_0(C)$ has a locally Euclidean neighborhood
if and only if $H\in\mathcal R_1(C)$, or, equivalently, if and only if $H\notin\mathcal Z$.
\end{Theorem}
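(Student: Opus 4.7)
My plan is to leverage the homeomorphism $\mathcal R_0(C) \cong (\overline{\mathbb D} \times \mathbb T^N)/\!\sim$ supplied by Lemma \ref{L3.1}, in which each slice $\{w\} \times \mathbb T^N$ over a boundary point $w \in \partial \mathbb D$ collapses to a single point. For part (a), when $N = 1$ this realizes $\mathcal R_0(C)$ as the solid torus $\overline{\mathbb D} \times \mathbb S^1$ with each longitudinal circle on its boundary pinched. I would exhibit the explicit homeomorphism $\Phi : \mathbb S^3 \to (\overline{\mathbb D} \times \mathbb S^1)/\!\sim$ given on $\mathbb S^3 = \{(z_1, z_2) \in \C^2 : |z_1|^2 + |z_2|^2 = 1\}$ by $\Phi(z_1, z_2) = [z_1, z_2/|z_2|]$ when $z_2 \ne 0$ and $\Phi(z_1, 0) = [z_1]$. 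Continuity at the locus $z_2 = 0$ is automatic since $|z_1| = 1$ there and the entire slice $\{z_1\} \times \mathbb S^1$ is a single equivalence class; bijectivity is immediate, and both spaces are compact Hausdorff, so $\Phi$ is a homeomorphism.

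For part (b), one direction is easy: if $H \in \mathcal R_1(C)$, then since $\mathcal Z$ is closed in $\mathcal R_0(C)$, the set $\mathcal R_1(C)$ is open, and by Corollary \ref{C3.1}(a) it is homeomorphic to the manifold $\mathbb D \times \mathbb T^N$ of dimension $N+2$. The substance of (b) is therefore to show that for $N \ge 2$, every point of $\mathcal Z$ fails to be locally Euclidean. Under Lemma \ref{L3.1}, such a point corresponds to a class $[w_0]$ with $w_0 \in \partial \mathbb D$. Taking local coordinates $(r,t)$ on $\overline{\mathbb D}$ near $w_0$, with $r \ge 0$ the distance to $\partial \mathbb D$ and $t$ arc length along $\partial \mathbb D$, a neighborhood of $[w_0]$ in the quotient is homeomorphic to $(-\delta, \delta) \times \mathring C(\mathbb T^N)$, where $\mathring C(\mathbb T^N) = ([0, \epsilon) \times \mathbb T^N)/(\{0\} \times \mathbb T^N)$ is the open cone and $[w_0]$ sits at $(0, \text{apex})$.

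From there the argument is a local homology computation. The long exact sequence of the pair $(\mathring C(\mathbb T^N), \mathring C(\mathbb T^N)\setminus\{\text{apex}\})$, whose second term deformation retracts to $\mathbb T^N$, gives $H_n(\mathring C(\mathbb T^N), \mathring C(\mathbb T^N)\setminus\{\text{apex}\}) \cong \widetilde H_{n-1}(\mathbb T^N)$; combining this with the Künneth formula, I obtain
\[
H_n\bigl(\R \times \mathring C(\mathbb T^N),\; \R \times \mathring C(\mathbb T^N) \setminus \{(0, \text{apex})\}\bigr) \cong \widetilde H_{n-2}(\mathbb T^N).
\]
For $N \ge 2$ the groups $\widetilde H_1(\mathbb T^N) = \Z^N$ and $\widetilde H_N(\mathbb T^N) = \Z$ are both nonzero and lie in distinct degrees, whereas the local homology at a point of a topological manifold is concentrated in a single degree. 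This contradicts $H$ being a manifold point. (As a sanity check, for $N = 1$ the formula yields $\widetilde H_{n-2}(\mathbb S^1) = \Z$ precisely in degree $n=3$ and zero otherwise, consistent with $\mathcal R_0(C) \cong \mathbb S^3$ from part (a).)

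The main obstacle is simply verifying the local product model $(-\delta, \delta) \times \mathring C(\mathbb T^N)$ at a point of $\mathcal Z$; once that is pinned down via Lemma \ref{L3.1}, the homology bookkeeping is a routine application of standard tools and requires nothing beyond the cohomology of the torus.
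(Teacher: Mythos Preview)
Your proposal is correct and follows essentially the same approach as the paper: both parts rest on Lemma~\ref{L3.1}, your map $\Phi$ in (a) is the inverse of the explicit map $f(re^{i\alpha},e^{i\beta})=(re^{i\alpha},\sqrt{1-r^2}\,e^{i\beta})$ the paper writes down, and in (b) the paper likewise identifies a neighborhood of a point of $\mathcal Z$ with $(-1,1)\times\cone(\T^N)$ and computes the local homology to be $\widetilde H_{i-2}(\T^N)$, concluding by the same observation that this is nonzero in more than one degree when $N\ge 2$.
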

When $N=0$, we have $\mathcal R_0(C)\cong\overline{\mathbb D}$, so unlike in the case $N\ge 2$, this is still a manifold with
boundary. Compare also \cite[Theorem 7.19]{Rembook}.
\begin{proof}
(a) In this case, Lemma \ref{L3.1} says that $\mathcal R_0(C)$ is homeomorphic to the quotient of $\overline{\mathbb D}\times\mathbb S^1$ by the equivalence
relation $(e^{i\alpha},e^{i\beta})\sim (e^{i\alpha},e^{i\beta'})$. Think of $\mathbb S^3$ as the subset $\{(w,z):|w|^2+|z|^2=1\}\subseteq\mathbb C^2$,
and then consider the map $f:\overline{\mathbb D}\times\mathbb S^1\to\mathbb S^3$,
\[
f(re^{i\alpha},e^{i\beta})=\left( re^{i\alpha}, \sqrt{1-r^2}e^{i\beta}\right) .
\]
It is easy to verify that $f$ induces a homeomorphism between the quotient and $\mathbb S^3$.

(b) By Lemma \ref{L3.1} each point $H \in \mathcal R_0(C)$ is represented by a pair $(w,z) \in  \overline{\mathbb D}\times\T^N$. We
must show that $H$ admits a locally Euclidean neighborhood if and only if $w \not\in \partial\mathbb D$. Note that the manifold $\mathbb D\times\T^N$ embeds into
$\mathcal R_0(C)$ as an open set under the quotient map from $\overline{\mathbb D}\times\T^N$, so one direction is clear.

For the other direction, if $w \in \partial \mathbb D$, then $H$ has a neighborhood $U$ homeomorphic to $(-1,1)\times\cone(\T^N)$ in which $H$ has coordinates
$(0,\ast)$. Here $\cone(\T^N)$ denotes the open cone on $\T^N$ with cone point $\ast$. That is,
\[
\cone(\T^N) = [0,1)\times\T^N/\!\!\sim
\]
with $\sim$ identifying all of $\{ 0\}\times\T^N$ to a single point $\ast$.

We compute the local homology groups of $\mathcal R_0(C)$ at $H$ as follows:
\begin{align*}
H_i (U, U-\{H\}) &\cong H_{i-1}(\cone(\T^N), \cone(\T^N) - \{\ast\}) \\
&\cong \widetilde{H}_{i-2}(\cone(\T^N) - \{\ast\}) \\
&\cong \widetilde{H}_{i-2}(\T^N).
\end{align*}
The first and second isomorphisms follow from \cite[IV(3.14)]{Dold} and \cite[IV(3.12)]{Dold} respectively; the third is induced by a deformation retraction of
$\cone(\T^N) - \{\ast\}$ onto $\T^N$.

Now note that the reduced homology groups $\widetilde{H}_{i-2}(\T^N)$ are non-zero throughout the range $1 \leq i-2 \leq N$, and hence are non-zero for two or
more values of $i$ when $N\geq 2$. By contrast, the local homology at any point of an $n$-dimensional manifold is non-zero only when $i=n$.
\end{proof}
We could also let $G$ act on $\mathcal R_0(C)$, but this modification
is not particularly interesting; for example, the orbit space is not a Hausdorff space.
\section{Schr{\"o}dinger case: one unbounded component}
We now assume that $C$ is of the form
\[
C = \bigcup_{j=1}^N[d_{j-1},c_j] \cup [d_N,\infty) , \quad\quad d_0<c_1< \ldots < d_N , \quad N\ge 0 .
\]
In other words, $C$ again has the gaps $(c_j,d_j)$, $j=1,\ldots ,N$. In addition, there is the unbounded gap $(-\infty,d_0)$.

As already indicated in the title of this section, such spectra occur for Schr{\"o}dinger operators
\[
Sy = -y''(x) + V(x)y(x) ,
\]
acting on $y\in L^2(\mathbb R)$. For example, if $C=[0,\infty)$, then the Schr{\"o}dinger operator with $V\equiv 0$ lies in $\mathcal R_1(C)$.
Again, Schr{\"o}dinger equations can be rewritten as canonical systems by a variation of constants procedure. See \cite[Section 1.3]{Rembook}
for details on this. We assume in the sequel that the exact same procedure as in that source is used, which amounts to imposing specifically Dirchlet boundary conditions
on the half line Schr{\"o}dinger equations at $x=0$ when computing half line $m$ functions. In terms of the associated canonical system,
this means that $H(0)=P_{e_2}=\bigl( \begin{smallmatrix} 0&0\\0&1\end{smallmatrix}\bigr)$ (this condition is meaningful because the coefficient
function $H(x)$ corresponding to a Schr{\"o}dinger equation is continuous). This is a detail that is essentially arbitrary and
could have been handled differently, but without affecting the general nature of the results below, and some choice has to be made.
We write $\mathcal S$ for the collection of canonical systems that are, in this sense, equivalent to a Schr{\"o}dinger equation.

Return to \eqref{2.4}, and recall that the factor $2$ on the right-hand side was freely chosen by us.
A good substitute for this formula for the sets $C$ currently under consideration is given by
\begin{equation}
\label{4.1}
h_0(z) = (1+d_0-\mu_0)\, \frac{\sqrt{d_0-z}}{\mu_0-z} \prod_{j=1}^N \frac{\sqrt{(c_j-z)(d_j-z)}}{\mu_j-z} .
\end{equation}
This time, we opted for a factor of $1+d_0-\mu_0$, and this precaution is crucial because $\mu_0\in [-\infty,d_0]$ now comes from
an unbounded gap and we want our formula to stay well behaved when $\mu_0\to -\infty$. We continue to use \eqref{4.1} for $\mu_0=-\infty$ also,
and in this case, we of course interpret the factor in front of the product as simply $\sqrt{d_0-z}$.

As in Section 2, \eqref{4.1} can be proved by either carrying out the integration in \eqref{2.3} or, more conveniently perhaps, by observing that
the function defined by \eqref{4.1} is a Herglotz function that has the correct arguments on the real line. Finally, recall again that this requirement
of $h_0$ being a Herglotz function determines the choice of square root, so there is no ambiguity in \eqref{4.1}.

With this important adjustment in place, we can be very brief in the remainder of this section since the subsequent analysis will follow what we did in the previous
section closely.
We must make sure that the discussion of Section 2, which, on the surface at least, was based on \eqref{2.4} rather than \eqref{4.1}, still applies
to our current setting. Fortunately, there are no problems. In particular, we still have \eqref{2.11}, with the sum
taken over $0\le j\le N$ now. Notice that still $w_0=0$ if $\mu_0=-\infty$ or $\mu_0=d_0$, so again there is no point mass to assign to one of
the $m$ functions when $\mu_0$ takes one of these values and $s_0$ becomes irrelevant then. Moreover, $w_0$ also approaches zero
when $\mu_0\to-\infty$ (or $\mu_0\to d_0$, but this part is obvious); this makes sure that changing $s_0$ will not affect the canonical system
much when $\mu_0$ is close to $-\infty$, and interpreting $\widehat{\mu}_0$ as coming from a circle will thus again deliver the right topology.

The continuity of $h_0$ at $\mu_0=-\infty$ guarantees that there are no issues with the argument from the proof of Proposition \ref{P2.1}, and we
do have the analog of this result available for the Schr{\"o}dinger case also.
\begin{Theorem}
\label{T4.1}
The action of $\textrm{\rm PSL}(2,\mathbb R)$ on $\mathcal R_1(C)$ is fixed point free,
and every orbit contains a unique $H\in\mathcal S$.

The (equivariant) map $\textrm{\rm PSL}(2,\mathbb R)\times\mathcal S(C)\to\mathcal R_1(C)$, $(A, H) \mapsto A\cdot H$ is a homeomorphism.
\end{Theorem}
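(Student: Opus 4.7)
The plan is to parallel the proof of Theorem \ref{T3.1}. The fixed-point-free property is immediate by essentially the same argument: if $A \cdot H = H$ for some $A \in \PSL(2,\R)$ and $H \in \mathcal R_1(C)$, then $A$ acts as a Möbius transformation fixing $m_+(z;H)$ pointwise on $\C^+$. Since $H \notin \mathcal Z$, $m_+$ is a genuine Herglotz function and attains at least three distinct values, forcing $A = 1$.

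The core of the argument is existence and uniqueness of a Schrödinger representative in each orbit. Canonical systems in $\mathcal S(C)$ should be characterized by a specific asymptotic behavior of $m_+(z)$ as $z \to \infty$ nontangentially in $\C^+$, of the form $m_+(z) \sim i\sqrt{-z}$, reflecting the Dirichlet boundary condition at $x=0$. This amounts to three real conditions, matching the dimension of $\PSL(2,\R)$. For existence, starting from any $H \in \mathcal R_1(C)$, I would read off the leading asymptotic data of $m_+$ from \eqref{2.11} combined with \eqref{4.1} and construct an $A \in \PSL(2,\R)$ whose Möbius action normalizes this data to the Schrödinger form. For uniqueness, if $A_1 \cdot H_1 = A_2 \cdot H_2$ with $H_1, H_2 \in \mathcal S(C)$, then $A_2^{-1} A_1$ maps one Schrödinger $m_+$ to another, and one verifies directly that the stabilizer in $\PSL(2,\R)$ of the $i\sqrt{-z}$ asymptotic class is trivial; combined with the fixed-point-free property, this forces $A_1 = A_2$ and $H_1 = H_2$.

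The homeomorphism statement then follows by the same invariance-of-domain argument as in Theorem \ref{T3.1}. The map $(A, H_0) \mapsto A \cdot H_0$ is continuous since Möbius transformations depend continuously on the group element and $m_+ \mapsto H$ is a homeomorphism by \cite[Corollary 5.8]{Rembook}; bijectivity comes from the preceding two steps. Both $\PSL(2,\R) \times \mathcal S(C)$ and $\mathcal R_1(C)$ are manifolds of the same dimension $N + 3$: the former because $\PSL(2,\R) \cong \mathbb D \times \mathbb S^1$ and $\mathcal S(C)$ is $N$-dimensional (parametrized by $\widehat{\mu}_1, \ldots, \widehat{\mu}_N$), and the latter by the parametrization of Section 2 with $Z \in \C^+$ and $\widehat{\mu}_j$ for $j = 0, \ldots, N$. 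By \cite[Proposition IV.7.4]{Dold} the continuous bijection is then a homeomorphism.

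The principal technical obstacle is the Schrödinger step: extracting a clean $m_+$-level characterization of $\mathcal S$ and verifying that the induced $\PSL(2,\R)$ action on the relevant asymptotic classes is simultaneously transitive and free. Uniqueness is comparatively routine once one knows the stabilizer is trivial, but existence requires more care, since a generic Möbius transformation can shift poles around the gaps of $C$, and one must confirm that every $m_+$ arising from $\mathcal R_1(C)$ can in fact be brought to the Schrödinger normalization by some element of $\PSL(2,\R)$.
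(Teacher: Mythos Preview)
Your overall strategy is correct and matches the paper's: establish fixed-point-freeness via the Herglotz property, find a unique Schr\"odinger representative in each orbit by normalizing asymptotics, then invoke invariance of domain. The paper handles the fixed-point-free claim and the homeomorphism step exactly as you outline.

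The gap, which you yourself flag as the ``principal technical obstacle,'' is the existence and uniqueness of the Schr\"odinger representative, and here the paper is considerably more concrete than your proposal. Rather than working with the asymptotic condition $m_+(z)\sim i\sqrt{-z}$ directly, the paper observes that for $H\in\mathcal R_1(C)$ the Schr\"odinger criterion collapses to a purely \emph{parameter-level} condition: $H\in\mathcal S$ if and only if $\mu_0=-\infty$, $A_+=0$, $D=1$. This makes both steps routine. For existence: if $\mu_0\neq -\infty$, then by \eqref{2.11} and \eqref{4.1} the function $m_+$ continues holomorphically through $(-\infty,\mu_0)$ and has a finite real limit $m_+(-\infty)$; act by some $B_1\in\PSL(2,\R)$ sending this value to $\infty$, which forces the new $\mu_0$ to equal $-\infty$; then act by $B_2\in G$ (which fixes $\infty$, hence preserves $\mu_0=-\infty$) to set $A_+=0$, $D=1$. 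For uniqueness: if $B\cdot H_1=H_2$ with $H_1,H_2\in\mathcal S$, then $B$ must fix $\infty$ (else it would move $\mu_0$ away from $-\infty$), so $B\in G$; but any non-identity element of $G$ disturbs $(A_+,D)$, so $B=1$.

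Your worry that ``a generic M\"obius transformation can shift poles around the gaps of $C$'' is thus resolved not by tracking asymptotic classes abstractly but by reading off which parameter gets moved where: the first step targets $\mu_0$ alone, the second targets $(A_+,D)$ alone, and the decomposition into these two steps is exactly what makes transitivity and freeness transparent.
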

Recall that $\textrm{\rm PSL}(2,\mathbb R)$ is homeomorphic to a (non-compact) solid torus $\mathbb D\times\mathbb S^1$, as can be seen
from either the KAN decomposition or the polar representation of the elements of this group. See also \eqref{kan} below.
\begin{Corollary}
\label{C4.1}
(a) $\mathcal R_1(C)$ is homeomorphic to $\mathbb D\times \mathbb T^{N+1}$.

(b) The map $\mathcal R_1(C)/\PSL (2,\R)\to\mathcal S(C)$ that sends an orbit to its unique representative in $\mathcal S(C)$ is a homeomorphism,
and $\mathcal S(C)\cong\T^N$.
\end{Corollary}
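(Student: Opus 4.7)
The plan is to deduce (a) from an analog of Lemma \ref{L3.1} adapted to the Schr\"odinger setting, and to deduce (b) from Theorem \ref{T4.1} together with a direct parametrization of $\mathcal S(C)$.

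For (a), I would reproduce the construction of Lemma \ref{L3.1} with parameter space $\overline{\mathbb D}\times\T^{N+1}$, the extra circle corresponding to $\widehat{\mu}_0\in[-\infty,d_0]$ coming from the unbounded gap with its two endpoints identified. Continuity at $|w|=1$ is handled by the analog of Proposition \ref{P2.1} already flagged in the paragraphs preceding Theorem \ref{T4.1}. The two delicate technical points at $\mu_0=-\infty$, namely continuity of $h_0(z)$ and vanishing of the point-mass weight $w_0$, have also been addressed there, and both follow from the normalization $1+d_0-\mu_0$ in \eqref{4.1}. With those pieces in place, the rest of the proof of Lemma \ref{L3.1} goes through verbatim, and restricting to $|w|<1$ gives $\mathcal R_1(C)\cong\mathbb D\times\T^{N+1}$.

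For the homeomorphism part of (b), the equivariant homeomorphism $\PSL(2,\R)\times\mathcal S(C)\cong\mathcal R_1(C)$ from Theorem \ref{T4.1} identifies the orbit map of $\PSL(2,\R)$ on $\mathcal R_1(C)$ with projection onto the second factor, making $\mathcal R_1(C)/\PSL(2,\R)\to\mathcal S(C)$ automatically a homeomorphism. To show $\mathcal S(C)\cong\T^N$, I would argue as at the end of the proof of Theorem \ref{T3.1}: each $H\in\mathcal S(C)$ should be determined by the bounded-gap parameters $(\widehat{\mu}_1,\ldots,\widehat{\mu}_N)$, with the Schr\"odinger condition $H(0)=P_{e_2}$, expressed as a prescribed $z\to\infty$ asymptotic for $m_+$, providing exactly three real constraints that pin down $A_+$, $D$, and $\widehat{\mu}_0$ as continuous functions of the remaining parameters. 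The resulting continuous bijection $\mathcal S(C)\to\T^N$ between compact spaces of the same dimension is then a homeomorphism by invariance of domain.

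The main obstacle is the explicit verification that the Schr\"odinger normalization pins down $A_+$, $D$, and $\widehat{\mu}_0$ from $(\widehat{\mu}_1,\ldots,\widehat{\mu}_N)$. This requires extracting enough information from the $z\to\infty$ expansion of $m_+(z)$ built out of \eqref{2.11} and \eqref{4.1} to match the three real conditions imposed by the Schr\"odinger normalization, and then checking continuous dependence on the bounded-gap parameters. It is conceptually the same sort of computation as at the end of the proof of Theorem \ref{T3.1}, but slightly more delicate because three real parameters must be recovered rather than one.
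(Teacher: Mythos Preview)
Your proposal is correct and follows essentially the same route as the paper. The one simplification you are missing: the paper states directly that, for $H\in\mathcal R_1(C)$, membership in $\mathcal S$ is equivalent to the parameter conditions $\mu_0=-\infty$, $A_+=0$, $D=1$, with the remaining $\widehat{\mu}_j$ arbitrary. So the three constraints you anticipate are three fixed constants, not functions of the bounded-gap parameters, and your ``main obstacle'' evaporates---the bijection $\mathcal S(C)\to\T^N$ is then immediate, and compactness (rather than invariance of domain) suffices to upgrade it to a homeomorphism.
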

As in the previous section, we do not really need the action of $\textrm{\rm PSL}(2,\mathbb R)$ to clarify the topology of $\mathcal R_1(C)$,
and indeed an easier way to identify this space with $\mathbb D\times\mathbb T^{N+1}$ is provided by the parametrization of Section 2.
\begin{proof}[Proof of Theorem \ref{T4.1} and Corollary \ref{C4.1} (sketch)]
As in the Dirac case, whether or not an $m$ function comes from a Schr{\"o}dinger equation is decided by the large $z$ asymptotics
of $m(z)$. The full criterion is awkward to state and use \cite{Lev,LevSar,Mar,RemdB}, but in the specialized situation under consideration, for
canonical systems from $\mathcal R_1(C)$, it simplifies considerably and boils down
to the following: we have $H\in\mathcal S$ if and only if the parameters satisfy $\mu_0=-\infty$, $A_+=0$, $D=1$ (and the remaining $\widehat{\mu}_j$,
if any, can take arbitrary values).

Now fix an $H\in\mathcal R_1(C)$. Inspection of \eqref{2.11} and \eqref{4.1} shows that if $\mu_0\not= -\infty$, then $m_+(z)$ has a holomorphic
continuation to a neighborhood of $(-\infty,\mu_0)$. Moreover, $m_+(-\infty):=\lim_{x\to -\infty} m_+(x)$ exists and $m_+(-\infty)\in\mathbb R$.
We can now act by a suitable
$B_1\in\textrm{\rm PSL}(2,\mathbb R)$ to make $B_1m_+(-\infty)=\infty$. So the new $m$ function $B_1\cdot m_+$ will now satisfy $\mu_0=-\infty$. An additional
action by an appropriate $B_2\in G$ from the dilation/translation subgroup from \eqref{3.5} will then move the parameters $A_+,D$ to the desired values
$A_+=0$, $D=1$, without changing $\mu_0=-\infty$ since $B_2\infty=\infty$ for such $B_2$. We have shown that the orbit of $H$
contains a Schr{\"o}dinger operator.

Uniqueness follows from similar arguments: If $H_j\in\mathcal S$ and $B\cdot H_1=H_2$, then, as just discussed, $B$ must fix $w=\infty$, so will
belong to $G$. Any non-identity element of $G$ changes at least one of the parameters $A_+,D$, so $B=1$ and in particular $H_2=H_1$. We have also
shown that the action is fixed point free; this latter claim is in fact completely trivial now because in the Schr{\"o}dinger case,
$\mathcal R_1(C)$ does not contain canonical systems with constant $m$ functions (unlike in the Dirac case),
and no $A\in\PSL(2,\R)$, $A\not= 1$, can fix more than one point when acting on $\C^+$.

With these adjustments in place, the rest of the argument proceeds as in the proof of Theorem \ref{T3.1}/Corollary \ref{C3.1}, and we leave
the remaining details to the reader.
\end{proof}
\begin{Theorem}
\label{T4.2}
(a) $\mathcal R_0(C)$ is homeomorphic to $\mathbb S^3$ if $N=0$.

(b) $\mathcal R_0(C)$ is not a manifold if $N\ge 1$. More precisely, a point $H\in\mathcal R_0(C)$ has a locally Euclidean neighborhood
if and only if $H\in\mathcal R_1(C)$, or, equivalently, if and only if $H\notin\mathcal Z$.
\end{Theorem}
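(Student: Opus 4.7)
The plan is to follow the strategy of Theorem \ref{T3.2} closely, with the torus factor upgraded from $\mathbb T^N$ to $\mathbb T^{N+1}$ to account for the additional circle coming from the unbounded gap $(-\infty,d_0)$.

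First I would establish the Schr{\"o}dinger analog of Lemma \ref{L3.1}: the parametrization $(Z,\widehat{\mu}_0,\ldots,\widehat{\mu}_N)$ of Section 4, combined with the identification of $\overline{\mathbb C^+}$ with $\overline{\mathbb D}$ and each $\widehat{\mu}_j$'s domain with a circle, assembles into a continuous map $F:\overline{\mathbb D}\times\mathbb T^{N+1}\to\mathcal R_0(C)$ that sends interior points to the corresponding $H\in\mathcal R_1(C)$ via \eqref{4.1} and \eqref{2.11}, and boundary points $(w,z)$, $|w|=1$, to the singular system $K_a\in\mathcal Z$ with $a$ determined by $w$. Continuity at interior points is immediate from the formulas; the prefactor $1+d_0-\mu_0$ in \eqref{4.1} and the fact that $w_0\to 0$ as $\mu_0\to-\infty$ (both noted in Section 4) ensure continuity at $\mu_0=-\infty$, so the analog of Proposition \ref{P2.1} is available and delivers continuity on the boundary circle as well. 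The map $F$ then descends to a homeomorphism from the quotient $\overline{\mathbb D}\times\mathbb T^{N+1}/\!\sim$ onto $\mathcal R_0(C)$, where $(w,z)\sim(w',z')$ iff $|w|=|w'|=1$ and $w=w'$.

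For part (a), when $N=0$, the quotient $\overline{\mathbb D}\times\mathbb S^1/\!\sim$ coincides exactly with the one appearing in the proof of Theorem \ref{T3.2}(a), so the explicit map
\[
f(re^{i\alpha},e^{i\beta}) = \left(re^{i\alpha},\sqrt{1-r^2}\,e^{i\beta}\right)
\]
into $\mathbb S^3\subseteq\mathbb C^2$ induces the desired homeomorphism with $\mathbb S^3$.

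For part (b), when $N\ge 1$, we have $N+1\ge 2$ torus factors, so the local-homology obstruction used in Theorem \ref{T3.2}(b) goes through verbatim: a point $H\in\mathcal Z$ corresponds to a pair $(w,z)$ with $|w|=1$, hence has a neighborhood $U$ homeomorphic to $(-1,1)\times\cone(\mathbb T^{N+1})$, and the same chain of isomorphisms yields $H_i(U,U-\{H\})\cong\widetilde{H}_{i-2}(\mathbb T^{N+1})$, which is nonzero for at least two distinct values of $i$ (for instance $i=3$ and $i=N+3$), ruling out a locally Euclidean neighborhood. Conversely, $H\in\mathcal R_1(C)$ has a Euclidean neighborhood because $\mathbb D\times\mathbb T^{N+1}$ embeds as an open subset of the quotient. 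I expect the only genuine piece of work is verifying the Schr{\"o}dinger version of Proposition \ref{P2.1}, especially at the limit $\mu_0=-\infty$; once the modified parametrization of Section 4 is seen to respect this limit, the topological arguments copy those of Theorem \ref{T3.2} with $N$ replaced by $N+1$.
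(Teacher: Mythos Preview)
Your proposal is correct and follows exactly the paper's own approach: the paper's proof of Theorem \ref{T4.2} consists of the single sentence ``If we assume the analog of Lemma \ref{L3.1}, then our original proof of Theorem \ref{T3.2} is still valid,'' and you have simply spelled out what that analog says (with the torus factor $\mathbb T^{N+1}$) and correctly identified the one technical point needing attention, namely continuity at $\mu_0=-\infty$, which is indeed handled by the normalization in \eqref{4.1} as discussed in Section 4.
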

If we assume the analog of Lemma \ref{L3.1}, then our original proof of Theorem \ref{T3.2} is still valid.
\section{Jacobi case: compact $C$}
Finally, we consider spectra $C$ with no unbounded component:
\[
C = \bigcup_{j=1}^{N+1} [d_{j-1},c_j], \quad\quad d_0<c_1<d_1<\ldots < c_{N+1} , \quad N\ge 0 .
\]
So $C$ now has two unbounded gaps $(-\infty, d_0)$, $(c_{N+1},\infty)$, in addition to the bounded gaps $(c_j,d_j)$, $j=1,2,\ldots ,N$.
As we will see, the presence of two parameters $\mu_0, \mu_{N+1}$ ranging over unbounded sets makes this case the most intricate one.
The issue is that now distinct $\mu_j$ can meet at $\mu=\infty$ while they were always well separated in the other cases.

Such compact spectra $C$ occur for Jacobi operators
\begin{equation}
\label{Jac}
(Jy)_n = a_n y_{n+1} + a_{n-1}y_{n-1} + b_n y_n ,
\end{equation}
acting on $y\in\ell^2(\mathbb Z)$. Using the device of singular intervals, these difference equations can also be rewritten as canonical systems
\cite[Sections 1.2, 5.3]{Rembook}. Here, a \textit{singular interval }of a canonical system $H$ is defined as a maximal interval $(a,b)$ with $H(x)=P_{\alpha}$ on $a<x<b$.
Recall that we denote by $P_{\alpha}$ the projection onto $v_{\alpha}=(\cos\alpha,\sin\alpha)^t$.
We refer to $\alpha$ (or, somewhat inconsistently but conveniently, sometimes also $v_{\alpha}$ or $P_{\alpha}$) as the \textit{type }of the singular interval.
As before, we will denote by $\mathcal J$ the collection of canonical systems that are, in this sense, equivalent to
Jacobi operators.

As our first assignment, we must again find a suitable version of \eqref{2.4}. We use the more intuitive notation $\mu_-=\mu_0\in [-\infty,d_0]$,
$\mu_+=\mu_{N+1}\in [c_{N+1},\infty]$ for the parameters from the unbounded gaps and then make the following choice for the
multiplicative constant:
\begin{multline}
\label{5.2}
h_0(z) =(1+d_0- \mu_-) (1+\mu_+-c_{N+1}) \times\\
\frac{\sqrt{(z-d_0)(z-c_{N+1})}}{(\mu_--z)(\mu_+-z)}
\prod_{j=1}^N \frac{\sqrt{(c_j-z)(d_j-z)}}{\mu_j-z}
\end{multline}
Of course, this formula can be proved in the same way as the previous versions. Note that this expression is continuous at points with $\mu_-=-\infty$
or $\mu_+=\infty$ if interpreted in the obvious way.

Usually, $h_0$ has a holomorphic continuation to a neighborhood of $z=\infty$. If specifically $\mu_-=-\infty$, $\mu_+=\infty$, then $h_0$ has a pole there,
and $h_0(z)=z +O(1)$. This implies that in this case, the representing measure $\nu$ from \eqref{2.31} has a point mass at infinity, $\nu(\{\infty\})=1$.
This in turn means that when implementing the procedure from Section 2, there is now an additional point mass that needs to be split between
$\nu_{\pm}$, but this time, it is not true that all of this must go into either $\nu_-$ or $\nu_+$. Recall that this requirement came
from the condition that $\sigma(H)\subseteq C$, but the presence or absence of a point mass at infinity will not affect the spectrum.
The upshot of all this is the following modification of \eqref{2.11} when $\pm\mu_{\pm}=\infty$:
\begin{equation}
\label{5.1}
m_+(z) = A_++D \left( \frac{1}{2}(h_0(z)-A)+ gz + \sum_{j=1}^N \frac{2s_j-1}{2} w_j \frac{1+\mu_j z}{\mu_j-z} \right) ,
\end{equation}
with $-1/2\le g\le 1/2$. If $(\mu_-,\mu_+)\not= (-\infty,\infty)$, then there is no additional parameter $g$ and \eqref{2.11} as written is valid.

The presence of $g$ some of the time seems to complicate matters considerably once we
start thinking about the proper topology on the parameter space.
We could try to view this space as a fibered space with base $\mathbb D\times\mathbb T^{N+2}$ and non-trivial fibers, consisting of the intervals $-1/2\le g\le 1/2$,
at the points $(A_+,D,\widehat{\mu}_j)$ satisfying $(\widehat{\mu}_-,\widehat{\mu}_+)=(-\infty,\infty)$. A different but closely related observation is that
if both $\mu_-\to-\infty$ and $\mu_+\to\infty$, then we no longer have $w_{\pm}\to 0$. This raises doubts about whether it is still appropriate to view
$\widehat{\mu}_{\pm}$ as coming from circles.

The general issue was analyzed in great detail in \cite{Remtop}, from the point of view of fibered spaces.
Adapted to our current situation, \cite[Corollary 1.6]{Remtop} suggests
that $\mathcal R_1(C)$ is still homeomorphic to $\mathbb D\times\mathbb T^{N+2}$.
So, loosely speaking, the fibers $-1/2\le g\le 1/2$ do not really stick out, but rather can be approximated by nearby points with trivial fibers.
The precise analysis was very tedious,
and we do not want to say anything about the details here, but let us point out that we can relate things quite directly to the situation studied in \cite{Remtop}
by using a map $F$ on canonical systems that transforms the $m$ functions as follows: $m_{\pm}(z)\mapsto m_{\pm}(-1/z)$. It is clear from the definitions
that $H\in\mathcal R_1(C)$ if and only if $F(H)\in\mathcal R_1(-1/C\cup\{ 0\})$. Moreover, $F$ is a homeomorphism between these spaces.

Spectra $C$ of Jacobi type become spectra $-1/C$ of Dirac type under this transformation, as studied in Section 3,
but with the added complication that the set now contains the isolated point $0$, which takes over the role
of one of the intervals $[b_{j-1},a_j]$. The treatment of \cite{Remtop} can be adapted to this situation.

However, all this is just background information, and we leave the matter at that. We will address (or perhaps bypass) this issue in a completely different way here,
by giving a prominent role to the $\PSL(2,\R)$ group action on $\mathcal R_1(C)$.

One more point needs our attention before we are ready to state the analog of Theorems \ref{T3.1}, \ref{T4.1}. A quick dimension count reveals that we cannot
really expect arbitrary orbits to intersect $\mathcal J$: For example, if $C=[-2,2]$,
then, as is well known \cite[Corollary 8.6]{Teschl}, $\mathcal J(C)$ contains only the free Jacobi matrix $a_n=1$, $b_n=0$.
Now we expect $\mathcal R_1(C)\cong\mathbb D\times\mathbb T^2$, which is a four-dimensional manifold,
corresponding to the parameters $A_+$, $D$, $\widehat{\mu}_-$, $\widehat{\mu}_+$. On the other hand,
the acting group $\PSL(2,\R)\cong \mathbb D\times\mathbb S^1$ is only three-dimensional. The situation for general $C$ is similar: the discrepancy
between $\mathcal R_1(C)$ and $\PSL(2,\R)\times\mathcal J(C)$ seems to be one circle (equivalently, one dimension in the torus factor).

We need a wider class of representatives, and for now we offer several options. Eventually (in Lemma \ref{L5.1} below) the choice will have to be made
very carefully.

When a Jacobi equation is rewritten as a canonical system,
then $H$ consists of singular intervals $H(x)=P_{\alpha}$, $a<x<b$, only. The origin $x=0$ is an endpoint, and the first singular interval to the left
is $(-1/a_0^2,0)$, with $a_0>0$ being one of the coefficients from \eqref{Jac}, and its type is $e_2$.
So $H(x)=P_{e_2}=\bigl(\begin{smallmatrix} 0 & 0\\0 & 1\end{smallmatrix}\bigr)$
on this interval $-1/a_0^2<x<0$. Please see \cite[Section 5.3]{Rembook}
for these statements.

Recall also that whether or not a Herglotz function is the $m$ function of a Jacobi matrix can again be decided
by looking at the large $z$ asymptotics. See \cite[Theorem 2.31]{Teschl}.
The $m$ functions $m(z)=m_{\pm}(z;H)$, $H\in\mathcal R_1(C)$, currently under consideration are guaranteed to be holomorphic at $z=\infty$ or have a
first order pole there; compare \eqref{5.12} below. In this case, these criteria take the following form:
\begin{enumerate}
\item $m(z)$ is the $m$ function $m(z)=m_+(z;J)$ of a right half line Jacobi matrix $J$
if and only if $m(z)=-1/z +O(1/z^2)$.
\item $m(z)$ is a \textit{left }half line Jacobi matrix $m$ function if and only if $m(z)=bz +O(1)$, $b>0$. 
\end{enumerate}
\begin{Theorem}
\label{T5.1}
The action of $\PSL(2,\R)$ on $\mathcal R_1(C)$ is fixed point free, and
every orbit contains a unique $H$ of each of the following types:\\
(a) $H(x) = H_0(x-t/a_0^2)$ with $H_0\in\mathcal J(C)$, $0\le t<1$; these latter quantities $H_0,t$ are also unique;\\
(b) $m_+(z;H)= -1/z + O(1/z^3)$
\end{Theorem}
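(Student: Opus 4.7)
The plan is to establish three things: fixed-point freeness, the unique representative of type (b), and the unique representative of type (a). I will handle (b) before (a), since (b) provides a Laurent-normalized canonical form that can then be translated into the geometric form (a).

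\emph{Fixed-point freeness.} If $A\cdot H = H$ for some $A\in\PSL(2,\R)$, then $A\cdot m_+(z) = m_+(z)$ for all $z\in\C^+$. Since $H\notin\mathcal Z$, $m_+$ is a non-constant Herglotz function, so its image is open in $\C^+$; but a non-identity element of $\PSL(2,\R)$ fixes at most two points of $\overline{\C^+}$. Hence $A=1$, by the same argument used for Theorem \ref{T4.1}.

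\emph{Part (b).} Formulas \eqref{5.1} and \eqref{5.2} make $m_+(z)$ meromorphic at $z = \infty$ with at most a simple pole. I will normalize to the form $-1/z + O(1/z^3)$ in three successive steps. First, if $m_+$ has a pole at $\infty$, act by the inversion $w\mapsto -1/w$ to make the resulting function vanish at $\infty$; otherwise translate by $w\mapsto w - m_+(\infty)$. Second, write the outcome as $\alpha_1/z + \alpha_2/z^2 + O(1/z^3)$; the Herglotz integral representation gives $\alpha_1 \le 0$, and $\alpha_1 = 0$ would force the representing measure to vanish and hence $m_+ \equiv 0$, which is ruled out by $H\notin\mathcal Z$. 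So $\alpha_1 < 0$, and I apply the dilation $w\mapsto w/|\alpha_1|$ to normalize $\alpha_1 = -1$. Third, I act by the unipotent $\bigl(\begin{smallmatrix} 1 & 0 \\ \alpha_2 & 1 \end{smallmatrix}\bigr)$, i.e.\ $w\mapsto w/(\alpha_2 w + 1)$; a Laurent expansion of $m_+(1 - \alpha_2 m_+ + \cdots)$ shows the new $1/z^2$ coefficient vanishes while the other normalizations are preserved. For uniqueness, suppose $A = \bigl(\begin{smallmatrix} a & b \\ c & d \end{smallmatrix}\bigr)$ stabilizes the class $\{-1/z + O(1/z^3)\}$. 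Evaluating at $\infty$ gives $b=0$, hence $ad = 1$; matching the $1/z$ coefficient forces $d^2 = 1$; matching the $1/z^2$ coefficient forces $c = 0$. So $A = \pm I$, the identity in $\PSL(2,\R)$.

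\emph{Part (a).} I will take $\tilde H$ to be the unique representative from (b) and identify it with a translated Jacobi canonical system. Since $m_+(\infty; \tilde H) = 0$, a standard canonical-systems computation gives $\tilde H(0+) = P_{e_2}$, so $\tilde H$ begins on the right half-line with a singular interval of type $e_2$ of some length $s \ge 0$. The transfer matrix through this interval is $\bigl(\begin{smallmatrix} 1 & -sz \\ 0 & 1 \end{smallmatrix}\bigr)$, and the resulting change in $m$-function asymptotics, combined with the Jacobi criterion \cite[Theorem 2.31]{Teschl} applied to the $m$-function at $x = s$, identifies the restriction of $\tilde H$ to $(s, \infty)$ as a canonical-system realization of a Jacobi matrix. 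A parallel analysis on the left completes the identification $\tilde H(x) = H_0(x - s)$ for some $H_0 \in \mathcal J(C)$. If $s \ge 1/a_0^2$, I shift $H_0$ by one step along the Jacobi indexing to reduce $s$ modulo $1/a_0^2$; writing $s = t/a_0^2$ then yields $t \in [0, 1)$.

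The main obstacle I anticipate is the precise bookkeeping for uniqueness of $(H_0, t)$: ensuring that the singular interval structure of $\tilde H$ determines both the Jacobi tail $H_0$ (from the Jacobi canonical system to the right of $x = s$) and the length $s$ uniquely, and that the reduction $s \mapsto t/a_0^2$ is compatible with the uniqueness in (b). I expect the transfer-matrix computation above to do most of the analytic work, leaving only careful bookkeeping against the discrete shift ambiguity of the Jacobi indexing.
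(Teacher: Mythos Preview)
Your treatment of fixed-point freeness and of part (b) is correct and close in spirit to the paper's, though the paper orders the $\PSL(2,\R)$ moves slightly differently (translate, invert, dilate, rather than your invert/translate, dilate, lower-unipotent). Both work.

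Part (a), however, has a genuine gap, and it is in the existence argument rather than in the uniqueness bookkeeping you flagged. You attempt to show that the (b)-representative $\tilde H$ is itself a shifted Jacobi system. This fails for two related reasons. First, the claim $\tilde H(0+)=P_{e_2}$ is backwards: the convention $m_+=-\tan\alpha$ for $H\equiv P_\alpha$ gives $m_+(\infty)=0\Leftrightarrow\alpha=0$, so the right half-line of $\tilde H$ starts with $P_{e_1}$, not $P_{e_2}$. Indeed, $m_+(z)=-1/z+O(1/z^2)$ is exactly the right half-line Jacobi criterion, so $\tilde H$ restricted to $(0,\infty)$ is already Jacobi with no initial $e_2$ strip to peel off. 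Second, and more seriously, the paper remarks explicitly that for the (b)-representative ``there is no control on what happens on the left half line.'' Concretely, $m_-=Dh_0-m_+$, and for generic $\mu_\pm$ the function $h_0$ from \eqref{5.2} is bounded at $\infty$, so $m_-(\infty)$ is a finite real number and the first left singular interval of $\tilde H$ is not of type $e_2$. Hence $\tilde H\notin\mathcal J$, and since the right half-line already starts with an $e_1$ interval of length exactly $1$, it is not a nontrivially shifted element of $\mathcal J$ either.

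The paper's route for (a) avoids this by choosing a \emph{different} orbit representative: it acts to reach $m_+(z)=bz-1/z+O(1/z^2)$ with $b>0$ strictly. The pole term $bz$ forces an initial $P_{e_2}$ interval of length $b$ on the right; after shifting by $b$, this interval moves to the left half-line, guaranteeing $m_-(z)=Lz+O(1)$ with $L\ge b>0$, which is the left Jacobi criterion. The strict positivity $b>0$ is exactly what secures the left half-line, and it is what your approach via the (b)-representative (where effectively $b=0$) cannot provide.
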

In part (a), $a_0>0$ again refers to the Jacobi coefficient of the Jacobi matrix that is associated with $H_0$. In particular, this quantity is determined by $H_0\in\mathcal J$.
In part (b), the chosen representative is a Jacobi matrix on the right half line, and in fact a special one, with $b_1=0$, but there is no control on what
happens on the left half line.
\begin{proof}
The first claim, about the action being fixed point free, is again trivial because $\mathcal R_1(C)$ does not contain canonical systems with constant $m$ functions
in the Jacobi case and no non-identity group element can fix more than one point when acting on $\C^+$.

(a) Let $H\in\mathcal R_1(C)$. Close inspection of \eqref{2.11}, \eqref{5.2}, \eqref{5.1} shows that $m_+(z)=m_+(z;H)$ is holomorphic at $z=\infty$ or has a pole there;
more precisely still,
\begin{equation}
\label{5.12}
m_+(z)=b_0z+a+\frac{c}{z}+O(1/z^2) ,
\end{equation}
with $b_0\ge 0$, $a\in\mathbb R$, $c<0$. The inequalities follow from the Herglotz property of $m_+$:
Clearly, $m_+$ could not satisfy $\Im m_+(z)>0$ for all (large) $z\in\C^+$ if we did not have $b_0\ge 0$. As for $c$, we observe that the Herglotz representation
of $m_+$ implies that $m_+(z)-b_0z-a$ also is a Herglotz function, which in our current situation is holomorphic at $z=\infty$. Again, by looking at large $z$,
we see that such a function can be a Herglotz function only if its Taylor expansion starts with $c/z+\ldots$, $c<0$.

If $b_0=0$ in \eqref{5.12}, then we start out by acting by the combined translation and inversion
\[
\begin{pmatrix} 0 & -1\\ 1 & 0 \end{pmatrix} \cdot \begin{pmatrix} 1 & -a \\ 0 & 1 \end{pmatrix}\cdot m_+(z) = \frac{-1}{m_+(z)-a}
\]
to reach another point in the same orbit with $b_0>0$ now. If $b_0>0$ initially, then we skip this first step.

We follow up by a suitable translation and dilation $g\in G$ to obtain
a new canonical system $H_1= B\cdot H$ whose $m$ function satisfies
\begin{equation}
\label{5.3}
m_+(z; H_1) = bz-\frac{1}{z}+O(1/z^2)\equiv bz + m_0(z), \quad\quad b> 0 .
\end{equation}
As we discussed, $m_0$ is the $m$ function of a (right) half line Jacobi matrix, which we can write as a canonical system $H_2(x)$, $x\ge 0$.
Moreover, an extra term $bz$ in the $m$ function corresponds to the insertion of an initial singular interval $H=P_{e_2}$ of length $b$;
compare \cite[Theorem 5.19]{Rembook} and its proof. So the right half line of $H_1=B\cdot H$ is given by
\[
H_1(x) = \begin{cases} P_{e_2} & 0< x<b \\ H_2(x-b) & x>b
\end{cases} .
\]
We can rephrase what we have done so far as follows: (1) The shifted canonical system $H_0(x)=H_1(x+b)$ has the (Jacobi) $m$ function $m_0$ as its
right half line $m$ function $m_+(z;H_0)$; (2) The left half line of $H_0$ starts with a singular interval $H_0(x)=P_{e_2}$, $-L<x<0$, $L\ge b>0$.

Now (2) implies that $m_-(z;H_0)=Lz + O(1)$, so the criterion reviewed above makes sure that the left half line is
a Jacobi matrix also, that is, $H_0\in\mathcal J$.
We have found a shifted version $H_1(x)=H_0(x-b)$ of an $H_0\in\mathcal J$ in the orbit, as desired.
Reviewing one more time how exactly we obtained this system $H_1$, we can also confirm that we did not have to shift $H_0\in\mathcal J$ by
more than the length of its first singular interval $(-1/a_0^2,0)$ on the left half line, so $b=t/a_0^2$ with $0\le t\le 1$, as claimed.
In this whole argument, we also use the (easy) fact that $\mathcal R_1(C)$ is invariant under shifts; compare \cite[Theorem 7.9(a)]{Rembook}.

This almost proves the existence part.
It remains to discuss why we never need to shift by the full length of this interval, corresponding to $t=1$ in the statement of Theorem \ref{T5.1}(a).
If we took $t=1$, then the resulting coefficient function $H(x)=H_0(x-1/a_0^2)$
will have singular intervals $(-L,0)$, $(0,1/a_0^2)$ of types $P_{\alpha}$ and $P_{e_2}$, $v_{\alpha}\not= e_2$, respectively, near $x=0$.
We can act on this by a suitable matrix of the form
\begin{equation}
\label{5.4}
A = \begin{pmatrix} 1 & 0 \\ a & 1 \end{pmatrix}\begin{pmatrix} 0 & -1 \\ 1 & 0 \end{pmatrix}
\end{equation}
to change the types to $P_{e_2}$ and $P_{e_1}$, respectively. The value of $a$ that is needed
will of course depend on $\alpha$. Recall here that the action by an $A\in\PSL(2,\R)$ changes the coefficient function
to $H_1(x)=A^{-1t}H(x)A^{-1}$ \cite[Theorem 3.20]{Rembook}. Then a further transformation by a suitable
\begin{equation}
\label{5.5}
B = \begin{pmatrix} c & 0 \\ 0 & 1/c \end{pmatrix} , \quad c>0,
\end{equation}
will not change these types $e_1,e_2$ but will allow us to adjust the lengths of the singular intervals to reach a situation where $H(x)=P_{e_1}$ on exactly $0<x<1$.
This happens because we will need to run a change of variable to keep our coefficient functions trace normed. Now the connection between
initial singular intervals and large $z$ asymptotics of the $m$ functions will imply that $B\cdot A\cdot H\in\mathcal J$.
See \cite[Theorem 4.34]{Rembook} and its proof for this final step; Proposition \ref{P5.1} below and the brief discussion that follows
may also be helpful in this context. To sum this up, we have shown that if $H_0\in\mathcal J(C)$, then the orbit of $H(x)=H_0(x-1/a_0^2)$ contains
an $H_1\in\mathcal J(C)$, so we are indeed never forced to take $t=1$ in the statement of Theorem \ref{T5.1}(a).

Moving on to the uniqueness part, we observe that if $H$ is of the form specified (a shifted $H_0\in\mathcal J$), then from the types of the singular intervals near
$x=0$, we know that the half line $m$ functions have the asymptotics $m_-(z)=bz + O(1)$, $b>0$, and $m_+(z) = cz -1/z +O(1/z^2)$, $c\ge 0$.

If we now act on such an $H$ by an $A\in\PSL(2,\R)$, then we can obtain another $H_1=A\cdot H$ of the same type only if $A\infty=\infty$, that is,
$A\in G$, because otherwise we would destroy the required asymptotics of $m_-$. However, now an $A\in G$ will preserve the asymptotics of $m_+$ only if $A=1$.
(The small detail that $b>0$ rather than only $b\ge 0$ is known here was crucial to the argument, which would otherwise break down, as it must, since
we could then act by an $A$ that switches $0,\infty$, such as the inversion $J$. We do know that $b>0$ because we agreed not to shift by the full
length of the singular interval $(-1/a_0^2,0)$.)

Finally, we discuss the uniqueness claim about the parameters $J,t$. We know that
$H_0(x-t/a_0^2)$ has a singular interval $I$ of type $e_2$ with $0\in I$ (or possibly $0$ is the right endpoint, if $t=0$), and we can recover $t$ simply
by checking what fraction of its total length $1/a_0^2$ lies to the right of $0$. Then we can shift back and recover $J$ uniquely since Jacobi matrices
are determined by their $m$ functions.

(b) As we discussed previously, if $H\in\mathcal R_1(C)$ is given and we don't do anything, then we already have $m_+(z)=bz+O(1)$, $b>0$, or
$m_+(z) = a + c/z+ O(1/z^2)$, $c<0$, for large $z$. In the first case, we can act first by a translation $T$ to improve this to $T\cdot m_+(z)=bz+O(1/z)$,
and then the inversion $J=\bigl( \begin{smallmatrix} 0 & -1 \\ 1 & 0 \end{smallmatrix}\bigr)$, followed by the multiplication $B$ by $b>0$, will produce
the desired asymptotics $B\cdot J\cdot T\cdot m_+=-1/z+O(1/z^3)$. The second case can be reduced to the first one by starting out with a translation
plus inversion.

Uniqueness follows from the by now familiar arguments: If $H_1,H_2$ both have $m_+$ functions of the type described and $H_2=A\cdot H_1$,
then clearly $A0=0$, or else the large $z$ asymptotics would not be preserved. So $A$ is the transpose of a matrix from $G$, and then it's easy to see that in fact $A=1$.
\end{proof}
In this proof of Theorem \ref{T5.1}, we have focused on $m$ functions, their asymptotics, and the effect of the group action on these.
We also could have worked with the coefficient functions $H(x)$ directly, and it is perhaps worthwhile to indicate this briefly since it sheds some additional light
on the whole argument.
\begin{Proposition}
\label{P5.1}
Every $H\in\mathcal R_1(C)$ consists of singular intervals only: $H(x)=P_{\alpha_j}$, $a_{j-1}<x<a_j$. These don't accumulate anywhere,
that is,
\[
\ldots <a_{-1}<a_0<a_1<\ldots, \quad\lim_{j\to -\infty} a_j = -\infty, \lim_{j\to\infty}a_j=\infty .
\]
\end{Proposition}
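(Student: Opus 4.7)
The natural plan is to reduce the problem to the case $H \in \mathcal{J}(C)$, where the conclusion holds tautologically (a Jacobi matrix rewritten as a canonical system consists by construction of alternating singular intervals of types $e_1$ and $e_2$, with lengths determined by the Jacobi coefficients $a_n$, which telescope to $\pm\infty$). We then transfer the structural property back along the $\PSL(2,\R)$ orbit using Theorem \ref{T5.1}(a). Concretely, given $H \in \mathcal{R}_1(C)$, Theorem \ref{T5.1}(a) produces $A \in \PSL(2,\R)$ and $H_0 \in \mathcal{J}(C)$ with $A \cdot H = H_0(\,\cdot\, - t/a_0^2)$ for some $t \in [0,1)$. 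The translation by $t/a_0^2$ preserves the singular-interval structure of $H_0$ (it just shifts the endpoints), so the entire argument reduces to the following geometric claim: the $\PSL(2,\R)$ action preserves the property of being a non-accumulating union of singular intervals.

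To establish this claim, recall that the action transforms coefficient functions as $H(x) \mapsto \lambda(x)^{-1} B^{-T} H(x) B^{-1}$, accompanied by the change of variable $\tau(x) = \int_0^x \lambda(s)\, ds$ with $\lambda(s) = \tr(B^{-T} H(s) B^{-1})$, in order to restore the trace normalization. Now on a singular interval where $H(x) = P_v$ for a unit vector $v$, one has
\[
B^{-T} P_v B^{-1} = (B^{-T}v)(B^{-T}v)^T = \|B^{-T}v\|^2 \, P_w ,
\]
where $w$ is the normalization of $B^{-T}v$. Thus $\lambda \equiv \|B^{-T}v\|^2$ is constant on the interval, the normalized image is \emph{exactly} the projection $P_w$, and the new singular interval $(\tau(a), \tau(b))$ has length $\|B^{-T}v\|^2 \cdot (b-a)$. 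Since $B^{-T}$ is a fixed invertible matrix and $v$ ranges over the compact set $\mathbb{S}^1$, the rescaling factor stays in a fixed compact subinterval of $(0, \infty)$. Consequently, both the singular-interval structure and the non-accumulation of endpoints (i.e.\ divergence of the partial sums of lengths) are preserved; moreover $\tau$ is genuinely a bijection of $\R$.

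Applying $A^{-1}$ to $H_0(\,\cdot\, - t/a_0^2)$ to recover $H$ yields the proposition. The only obstacle, such as it is, consists in checking the explicit transformation formula for coefficient functions under $\PSL(2,\R)$ together with the reparametrization; this is the content of \cite[Theorem 3.20]{Rembook} and the accompanying trace-normalization discussion. Everything else is bookkeeping: the $\PSL(2,\R)$ action sends rank-one projections to rank-one projections and rescales lengths by a bounded multiplicative factor, which is all we need.
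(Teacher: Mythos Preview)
Your argument is correct and follows essentially the same route as the paper: deduce the structure from Theorem \ref{T5.1}(a) by noting that a shifted $H_0\in\mathcal J(C)$ has the required form and that neither shifts nor the $\PSL(2,\R)$ action disturb the singular-interval structure. The paper simply asserts that ``neither shifts nor the group action change the general structure of $H(x)$,'' whereas you spell out the computation on singular intervals (the rank-one projection transforms to another rank-one projection, with a uniformly bounded length rescaling), which is exactly the content needed to justify that assertion.
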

For us here, this result, which may be of some independent interest, can be viewed as an immediate consequence of Theorem \ref{T5.1}(a) because an $H\in\mathcal J(C)$
is of the type described in Proposition \ref{P5.1},
and then neither shifts nor the group action change the general structure of $H(x)$. But the result could also be proved directly
without too much effort, and then an alternative proof of Theorem \ref{T5.1} could be based on it. We then first shift a general $H\in\mathcal R_1(C)$ to
move one of the endpoints of the singular intervals to $x=0$, and then we act by a suitable $A\in\PSL(2,\R)$ to reach the types $e_2$ and $e_1$, respectively,
for the two intervals adjacent to $x=0$, and finally we act by a dilation to make the length of first singular interval on the right half line equal to $1$.
These are exactly the properties needed to ensure that the canonical system lies in $\mathcal J$, and this can be confirmed by using the fact that initial
singular intervals and their types contribute the leading terms to the large $z$ asymptotics of $m_{\pm}(z)$. See again \cite[Theorem 4.34]{Rembook} and its proof
for further details on this.

To state and prove the analog of the remaining parts of Theorems \ref{T3.1}, \ref{T4.1}, we need an additional tool.
The (left) \textit{shift }$S$ on Jacobi matrices is defined in the obvious way: if $J$ has coefficients $a_n,b_n$, $n\in\Z$, then
the coefficients of $SJ$ are $a_{n+1},b_{n+1}$. This map $S$ preserves spectra and the property of being reflectionless.
We can think of it as acting on the corresponding space $\mathcal J(C)$ of canonical systems, and then it becomes a homeomorphism.
We will occasionally be quite cavalier about the distinction between Jacobi matrices $J$ and the associated canonical systems $H=H_J\in\mathcal J$ in the sequel.

Ignoring that policy for now, we may reasonably ask ourselves what exactly we need to do to a canonical system
$H\in\mathcal J$ to implement the map $S$. The trap to avoid is to think that this is also just a shift of $H(x)$.
In fact, that can not possibly work since the types of the singular intervals near $x=0$ will no longer be right after a shift.
Rather, we need to follow up the shift of $H(x)$ with the action of a suitable $A\in\PSL(2,\R)$. Such combined maps are called \textit{twisted shifts.}

Let us give the detailed statement for the right shift $S^{-1}$. We already know, from the proof of Theorem \ref{T5.1}(a),
that if $H\in\mathcal J$, then there is a unique $A=A(H)\in\PSL(2,\R)$ such that
$A\cdot H(x-1/a_0^2)\in\mathcal J$ also. On the other hand, a computation that compares the transfer matrices of
the Jacobi matrix and the canonical system shows that if we set
\[
A=A(H) = \begin{pmatrix} 0 & -1/a_0 \\ a_0 & -b_0/a_0 \end{pmatrix},
\]
then
\begin{equation}
\label{5.9}
A(H)\cdot H\left( x-\frac{1}{a_0^2}\right) =H_{S^{-1}J}(x) ,
\end{equation}
the canonical system corresponding to the right shifted version $S^{-1}J$
of the Jacobi matrix $J$ that was associated with the original $H\in\mathcal J$.
This gives a simple explicit formula for $A(H)$, $H\in\mathcal J$, in terms of the Jacobi coefficients, but actually we won't use this in the sequel.
What matters for us is the fact that $A(H)$ depends continuously on $H\in\mathcal J(C)$, and this information can also be conveniently extracted
from the discussion above that constructed $A(H)$ as the product of the matrices from \eqref{5.4}, \eqref{5.5}.
We conclude this short digression on twisted shifts and refer the reader to \cite[Section 7.1]{Rembook} for further background and also to \cite{RemToda},
where the terminology of twisted shifts was introduced and their usefulness advertised.

The key fact about the shift $S$ on $\mathcal J(C)$ is that it can be embedded in a continuous flow $\phi_t: \mathcal J(C) \to \mathcal J(C)$.
So $\phi_0=\operatorname{id}$, $\phi_1=S$, $\phi_{s+t}=\phi_s\phi_t$. Moreover, $\phi_t$ for any $t\in\R$ is a homeomorphism,
and the map $(t,J)\mapsto \phi_tJ$ is also continuous.

A convenient way to obtain such a $\phi_t$
is to use a suitable flow from the \textit{Toda hierarchy. }Recall that these flows commute with the shift, and they can be linearized simultaneously
on $\mathcal J(C)\cong\T^N$. See \cite[Ch.\ 13]{Teschl}, especially Theorem 13.5 there. This means that if $\T^N$ is given suitable coordinates
$(x_1, \ldots , x_N) \in\R^N$, with $x,x'$ representing the same point if and only if $x\equiv x'\bmod 1$, then $Sx = x+a$, $\psi_t x = x+tb$, and any $b\in\R^N$
is available here by picking a suitable flow from the hierarchy. For our purposes, we of course need $b=a$, and we fix such a flow once and for all and
denote it by $\phi_t$, as above. It is perhaps also worth pointing out that the hierarchy of Toda flows is available for Jacobi matrices only; for general canonical
systems, no analog is currently known, and this is in fact an issue that seems to deserve closer investigation. First steps were taken in \cite{HurOng,RemToda}.

As a final preparation, we now need the following variation on Theorem \ref{T5.1}(a). We denote the canonical system corresponding to a Jacobi matrix $J$
by $H_J\in\mathcal J$ when needed, but recall again that we don't always carefully distinguish between $J$ and $H_J$.

We turn on the transformation from \eqref{5.9} gradually, but using the alternative construction from \eqref{5.4}, \eqref{5.5}. So let
\[
A(t,J) = \begin{pmatrix} 1+t(c-1) & 0 \\ 0& \frac{1}{1+t(c-1)} \end{pmatrix} \begin{pmatrix} 1 & 0\\ ta &1\end{pmatrix}
\begin{pmatrix} \cos\pi t/2 & -\sin\pi t/2\\ \sin\pi t/2&\cos\pi t/2\end{pmatrix}  ,
\]
with $a\in\R, c>0$ being the parameters needed for $J$.
\begin{Lemma}
\label{L5.1}
Every orbit contains a unique canonical system of the form
\begin{equation}
\label{5.11}
A(t,SJ)\cdot H_{\phi_t J} \left( x - \frac{t}{a_0^2(\phi_t J)} \right) ,
\end{equation}
with $J\in\mathcal J(C)$, $0\le t<1$; these latter quantities $J,t$ are also uniquely determined by the orbit.
\end{Lemma}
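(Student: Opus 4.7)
The plan is to reduce Lemma \ref{L5.1} to Theorem \ref{T5.1}(a) by a simple reparametrization using the Toda flow. Since $A(t, SJ) \in \PSL(2,\R)$, multiplying by it does not leave the orbit, so the content of the lemma is really the claim that as $(J, t)$ ranges over $\mathcal J(C) \times [0, 1)$, the shifted systems $H_{\phi_t J}(x - t/a_0^2(\phi_t J))$ meet every orbit exactly once. This is essentially Theorem \ref{T5.1}(a) with the substitution $H_0 = \phi_t J$.

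For existence, given an orbit I would use Theorem \ref{T5.1}(a) to extract the unique pair $(H_0, s) \in \mathcal J(C) \times [0, 1)$ with $H_0(x - s/a_0^2(H_0))$ in the orbit. Set $J = \phi_{-s} H_0$, which lies in $\mathcal J(C)$ since $\phi_{-s}$ is a homeomorphism of $\mathcal J(C)$, and take $t = s$. Then $\phi_t J = H_0$, so $H_{\phi_t J}(x - t/a_0^2(\phi_t J))$ equals $H_0(x - s/a_0^2(H_0))$ and lies in the orbit. Acting by $A(t, SJ) \in \PSL(2,\R)$ produces another element of the same orbit of the form \eqref{5.11}.

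For uniqueness, suppose two pairs $(J, t), (J', t') \in \mathcal J(C) \times [0, 1)$ produce elements of \eqref{5.11} in a common orbit. Peeling off the $\PSL(2,\R)$ factors $A(t, SJ)$ and $A(t', SJ')$, the shifted canonical systems $H_{\phi_t J}(x - t/a_0^2(\phi_t J))$ and $H_{\phi_{t'} J'}(x - t'/a_0^2(\phi_{t'} J'))$ also lie in this orbit. With $H_0 = \phi_t J$, $H_0' = \phi_{t'} J'$ both in $\mathcal J(C)$ and $t, t' \in [0, 1)$, the uniqueness clause of Theorem \ref{T5.1}(a) yields $\phi_t J = \phi_{t'} J'$ and $t = t'$; cancelling the homeomorphism $\phi_t$ then gives $J = J'$.

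The proof in its stated form is therefore almost formal, and the hard part is not existence or uniqueness but rather justifying the elaborate definition \eqref{5.11}. The reason the factor $A(t, SJ)$ was inserted is to make \eqref{5.11} close up continuously at $t = 1$: a short computation using \eqref{5.9} (with $J$ replaced by $SJ$) shows that the $t = 1$ limit of \eqref{5.11} equals $H_J$, coinciding with its value at $t = 0$. This endpoint matching is what promotes the half-open interval $[0, 1)$ in the $t$-factor to an honest $\mathbb S^1$ and thereby delivers the extra circle that was needed for the dimension count, and it is this continuity and closure, not the set-theoretic bijection handled in the paragraphs above, that will do the real work in the subsequent topological arguments.
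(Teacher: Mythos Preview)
Your proof is correct and follows essentially the same route as the paper's: both reduce existence and uniqueness to Theorem \ref{T5.1}(a) via the reparametrization $H_0=\phi_t J$ (equivalently $J=\phi_{-t}H_0$), and your closing paragraph on the endpoint matching at $t=1$ mirrors the discussion the paper places immediately after the statement of the lemma.
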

This looks rather unwieldy, but closer inspection reveals one highly desirable feature: if we denote the canonical system from \eqref{5.11} by $H(t,J)$,
then $H(1,J)=H(0,J)$. Indeed, by construction we have $\phi_1J=SJ$, $A(1,SJ)=A(SJ)$, so \eqref{5.9} shows that
\[
H(1,J)=A(SJ)\cdot H_{SJ} \left( x-\frac{1}{a_0^2(SJ)}\right) =H_J(x) = H(0,J) .
\]
This property will become crucial because, as we'll see, the extra parameter $t$ will only be compatible with the topology of $\mathcal R_1(C)$
if it can be interpreted as coming from a circle.
\begin{proof}
To prove that the orbit of a given $H\in\mathcal R_1(C)$
contains a canonical system of the form \eqref{5.11}, we can of course ignore the action of $A(t,SJ)\in\PSL(2,\R)$, which
will leave us in the same orbit. By Theorem \ref{T5.1}(a), the orbit under consideration contains a canonical system
of the form $H_1(x)=H_{J_0}(x-t/a_0^2(J_0))$, with $0\le t<1$, $J_0\in\mathcal J(C)$. If we put $J=\phi_{-t}J_0$, we see that $H_1$ is of the required type.

This last step can be reversed, and thus we see that we have a bijection between the collection of systems $\{H_J(x-t/a_0^2(J))\}$ from Theorem \ref{T5.1}(a)
and the collection $\{ H_{\phi_t J} (x - t/a_0^2(\phi_t J))\}$ from \eqref{5.11}, implemented by mapping $t\mapsto t$ and $J\mapsto\phi_{-t}J$.
As a consequence, uniqueness now follows from what we already did in the proof of Theorem \ref{T5.1}(a). Finally, note that $t,J$ can be reconstructed
from $H_{\phi_tJ}(x-t/a_0^2(\phi_tJ))$ in the same way as before.
\end{proof}
\begin{Theorem}
\label{T5.2}
We have $\textrm{\rm PSL}(2,\mathbb R)\times\mathbb S^1\times \mathcal J(C)\cong\mathcal R_1(C)$, and a homeomorphism
is provided by the map
\begin{equation}
\label{5.24}
(B,e^{2\pi it}, J) \mapsto B\cdot A(t,SJ)\cdot H_{\phi_t J} \left( x - \frac{t}{a_0^2(\phi_t J)} \right) .
\end{equation}
\end{Theorem}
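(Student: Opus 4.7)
I plan to verify that the map \eqref{5.24}, call it $f$, is well-defined, continuous, and bijective, and then to establish continuity of $f^{-1}$ via a compactness argument that exploits the ambient compact space $\mathcal{R}_0(C) \supseteq \mathcal{R}_1(C)$ with $\mathcal{R}_0(C) \setminus \mathcal{R}_1(C) = \mathcal{Z}$. The nontrivial content is concentrated in the last step.

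Well-definedness and bijectivity come essentially for free from what is already in place. The wrap-around identity $H(1, J) = H(0, J)$ recorded in the discussion following Lemma \ref{L5.1} shows that $(t, J) \mapsto A(t, SJ) \cdot H_{\phi_t J}(x - t/a_0^2(\phi_t J))$ descends from $[0, 1] \times \mathcal{J}(C)$ to $\mathbb{S}^1 \times \mathcal{J}(C)$, so $f$ is well-defined on the product. Bijectivity combines Lemma \ref{L5.1} (unique orbit representative parametrized by $(t, J) \in [0, 1) \times \mathcal{J}(C)$) with the fixed-point-free $\PSL(2, \R)$ action of Theorem \ref{T5.1}, which supplies the unique $B$ sending this representative to the prescribed $H \in \mathcal{R}_1(C)$.

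Continuity of $f$ follows by inspection of the ingredients in \eqref{5.24}: the matrix $A(t, SJ)$ depends continuously on $(t, J)$ from its explicit formula together with continuity of the shift $S$ on $\mathcal{J}(C)$; the Toda flow $(t, J) \mapsto \phi_t J$ is jointly continuous by construction; the Jacobi coefficient $a_0^2(\phi_t J) > 0$ varies continuously; the translation $H(\cdot) \mapsto H(\cdot - c)$ is continuous in the canonical system metric; and the $\PSL(2, \R)$ action on canonical systems is jointly continuous in $(B, H)$.

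The continuity of $f^{-1}$ is the main obstacle. Let $H_n \to H$ with $H_n = f(B_n, e^{2\pi i t_n}, J_n)$ and $H = f(B, e^{2\pi i t}, J)$. By compactness of $\mathbb{S}^1 \times \mathcal{J}(C)$, pass to a subsequence so that $(t_n, J_n) \to (t_\infty, J_\infty)$; continuity of the slice construction then gives $H^r_n \to H^r_\infty \in \mathcal{R}_1(C)$, where $H^r_n$ denotes the representative $A(t_n, SJ_n) \cdot H_{\phi_{t_n} J_n}(\cdot - t_n / a_0^2(\phi_{t_n} J_n))$. The key step is to rule out $B_n \to \infty$ in $\PSL(2, \R)$. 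Granting this, a final subsequence yields $B_n \to B_\infty$ with $f(B_\infty, e^{2\pi i t_\infty}, J_\infty) = H$, and the uniqueness in Lemma \ref{L5.1} together with Theorem \ref{T5.1} forces $(B_\infty, t_\infty, J_\infty) = (B, t, J)$. To rule out escape, suppose toward contradiction that $B_n \to \infty$; passing to a subsequence and invoking the standard normal-families behavior of M\"obius transformations, $B_n \to q$ locally uniformly on $\overline{\C} \setminus \{p\}$ for some $p, q \in \mathbb{R}^\infty$. Since the $m$ functions $m_\pm(z; H^r_n)$ converge locally uniformly to those of $H^r_\infty \in \mathcal{R}_1(C)$, which are genuine (non-constant) Herglotz functions, evaluating at any $z \in \C^+$ where $m_\pm(z; H^r_\infty) \neq p$ gives $m_\pm(z; H_n) = B_n \cdot m_\pm(z; H^r_n) \to q$. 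Hence $H_n \to K_q \in \mathcal{Z}$, contradicting $H_n \to H \in \mathcal{R}_1(C)$ and $\mathcal{Z} \cap \mathcal{R}_1(C) = \emptyset$.
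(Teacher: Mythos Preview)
Your argument is correct. The handling of well-definedness, bijectivity, and continuity of $f$ matches the paper's. The route diverges only in the last step, continuity of $f^{-1}$. The paper compactifies: using the KAN decomposition $\PSL(2,\R)\cong\C^+\times\mathbb S^1$, it extends $F$ to $\overline{\C^+}\times\mathbb S^1\times\mathbb S^1\times\mathcal J(C)\to\mathcal R_0(C)$ by sending the boundary $\R^{\infty}$ to $\mathcal Z$, checks continuity of the extension via an estimate in the style of Proposition \ref{P2.1}, and then invokes the fact that a continuous bijection between compact metric spaces is automatically a homeomorphism; restriction to the open subspaces finishes. You instead stay inside $\mathcal R_1(C)$ and rule out escape of $B_n$ by the standard convergence dichotomy for divergent sequences of M\"obius maps. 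Your approach is a bit more direct for the theorem as stated; the paper's compactification has the collateral benefit of producing the quotient description of $\mathcal R_0(C)$ that is reused immediately for Theorem \ref{T5.3}.

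One small imprecision worth noting: the group acts on $m_-$ via $-m_-(z;B\cdot H)=B\cdot(-m_-(z;H))$, not $m_-(z;B\cdot H)=B\cdot m_-(z;H)$. This does not damage your argument since the contradiction already follows from $m_+$ alone: you have both $m_+(z;H_n)\to q\in\R^{\infty}$ and $m_+(z;H_n)\to m_+(z;H)\in\C^+$.
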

As is probably already clear from what we did above, it is understood here that we use the representative of $t$ with $0\le t<1$ on the right-hand side.
\begin{Corollary}
\label{C5.1}
$\mathcal R_1(C)$ is homeomorphic to $\mathbb D\times\T^{N+2}$, and
\[
\mathcal R_1(C)/\PSL(2,\R)\cong\mathbb S^1\times\mathcal J(C)\cong\mathbb S^1\times\T^N\cong\T^{N+1} .
\]
\end{Corollary}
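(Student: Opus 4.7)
The plan is to derive both statements directly from Theorem \ref{T5.2} by substituting standard product decompositions into the three factors on the left-hand side of the homeomorphism \eqref{5.24}. No new analysis of canonical systems is needed; the work has already been done in setting up Theorem \ref{T5.2}.

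For the first claim, I would begin with the identification $\PSL(2,\R) \cong \mathbb{D} \times \mathbb{S}^1$ that the paper records right after Theorem \ref{T4.1} (via the KAN or polar decomposition). Next I would use $\mathcal{J}(C) \cong \mathbb{T}^N$. This is essentially built into the preparation for Lemma \ref{L5.1}: the simultaneous linearization of the Toda hierarchy on $\mathcal{J}(C)$ realized by the coordinates $(x_1,\ldots,x_N)\in\R^N/\Z^N$ produces exactly such a homeomorphism. One could alternatively read it off from the parametrization of Section 2 by noting that an $H\in\mathcal{J}(C)$ is pinned down by the $N$ gap parameters $\widehat{\mu}_j$ associated with the bounded gaps once the normalization on the leading asymptotics of $m_\pm$ (coming from Jacobi type) is fixed. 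Combining Theorem \ref{T5.2} with these two identifications gives
\[
\mathcal{R}_1(C) \;\cong\; \PSL(2,\R)\times\mathbb{S}^1\times\mathcal{J}(C) \;\cong\; (\mathbb{D}\times\mathbb{S}^1)\times\mathbb{S}^1\times\mathbb{T}^N \;=\; \mathbb{D}\times\mathbb{T}^{N+2},
\]
which is the first statement.

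For the quotient, the key observation is that the homeomorphism \eqref{5.24} is equivariant with respect to the $\PSL(2,\R)$ action on $\mathcal{R}_1(C)$ and the left-multiplication action on the first factor of $\PSL(2,\R)\times\mathbb{S}^1\times\mathcal{J}(C)$. Indeed, the image of $(B,e^{2\pi it},J)$ under \eqref{5.24} is $B\cdot H(t,J)$ in the notation of Lemma \ref{L5.1}, so premultiplying by $B'\in\PSL(2,\R)$ sends this to $(B'B)\cdot H(t,J)$, affecting only the first coordinate. Since left multiplication of $\PSL(2,\R)$ on itself is free with quotient a single point, the induced map on quotients is a homeomorphism
\[
\mathcal{R}_1(C)/\PSL(2,\R) \;\cong\; \mathbb{S}^1\times\mathcal{J}(C) \;\cong\; \mathbb{S}^1\times\mathbb{T}^N \;\cong\; \mathbb{T}^{N+1}.
\]

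The main obstacle, if there is one, is purely bookkeeping: verifying the equivariance claim above and confirming that the Toda-flow linearization really gives a \emph{homeomorphism} $\mathcal{J}(C)\to\mathbb{T}^N$ (not merely a bijection or a smooth map on the smooth locus). Both issues are essentially handled by the references cited earlier (\cite[Ch.\ 13]{Teschl} for the linearization and the continuity discussion already carried out for $\mathcal{D}(C)$ and $\mathcal{S}(C)$ in the preceding sections), so I expect no substantive difficulty.
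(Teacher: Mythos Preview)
Your proposal is correct and is precisely the intended argument: the paper does not give a separate proof of Corollary \ref{C5.1}, treating it as an immediate consequence of Theorem \ref{T5.2} together with the identifications $\PSL(2,\R)\cong\mathbb D\times\mathbb S^1$ and $\mathcal J(C)\cong\T^N$ already recorded in the text. Your equivariance observation for the quotient is exactly right and is the natural way to read off the second statement.
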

\begin{proof}[Proof of Theorem \ref{T5.2}]
The previous work has established that this map
\begin{equation}
\label{5.51}
F: \textrm{\rm PSL}(2,\mathbb R)\times\mathbb S^1\times \mathcal J(C) \to\mathcal R_1(C)
\end{equation}
which acts as described in \eqref{5.24} is a bijection. It is also continuous; this is mostly obvious by inspection, the only issue being
the points with $t=0$, but here we refer to the discussion following the statement of Lemma \ref{L5.1}.

We now want to use an automatic continuity result to deduce that $F$ is a homeomorphism. To do this, we identify
$\PSL(2,\R)$ with $\C^+\times\mathbb S^1$, using the KAN decomposition: We send a point $(a+ic,e^{2i\alpha})\in\C^+\times\mathbb S^1$
to the group element represented by the matrix
\begin{equation}
\label{kan}
\begin{pmatrix} c & a/c \\ 0 & 1/c \end{pmatrix} \begin{pmatrix} \cos \alpha & -\sin\alpha\\\sin\alpha&\cos\alpha \end{pmatrix} ;
\end{equation}
this sets up a homeomorphism between $\C^+\times\mathbb S^1$ and $\PSL(2,\R)$.

We now proceed as in Lemma \ref{L3.1}. We extend $F$ in such a way that the induced map on a suitable
quotient delivers a homeomorphism to $\mathcal R_0(C)$. Taking the original treatment as our guideline, it is fairly obvious how we want to do this.
Reinterpret $F$ as a map
\[
F: \C^+\times\mathbb S^1\times\mathbb S^1\times\mathcal J(C)\to\mathcal R_1(C) ,
\]
using the above identification of $\PSL(2,\R)$ with the product of the first two factors, and then extend
\[
F: \overline{\C^+}\times\mathbb S^1\times\mathbb S^1\times\mathcal J(C)\to\mathcal R_0(C)
\]
by setting $F(a,w,z,J)=K_a\in\mathcal Z$, the singular system with $\pm m_{\pm}(z)\equiv a$, for $a\in\R^{\infty}$.

We claim that this extended map $F$ is still continuous. Of course, we only need to verify continuity at the added points
$(a,w,z,J)$, $a\in\R^{\infty}$, since these form a closed subset. We can then argue as in the proof of Proposition \ref{P2.1}, so we will be content with giving a sketch.
Fix such a point and assume that $(u_n,w_n,z_n,J_n)\to (a,w,z,J)$. We can focus on the $u_n\notin\R^{\infty}$ here (if any) because
what we are trying to show is already obvious for the other points.

The $m$ functions $m_n(z)\equiv m_+(z; F(u_n,w_n,z_n,J_n))$ are then of the form
\[
m_n(z) = c_n^2 M_n(z) + a_n , \quad u_n= a_n+ic_n ,
\]
and here the $M_n$ similarly are the $m$ functions of $F(i,w_n,z_n,J_n)$. The key observation is that these only depend on $(w_n,z_n,J_n)$, and these latter parameters
come from the \textit{compact }space $\mathbb S^1\times\mathbb S^1\times\mathcal J(C)$. Here, we use the fact that indeed $\mathcal J(C)\cong\T^N$
is compact, which is well known
and easily established, using the parameters $\widehat{\mu}_j$, $j=1,\ldots ,N$, to represent $\mathcal J(C)$. We tacitly used this already
above in our brief review of Toda flows. See also \cite[Theorem 1.5]{PoltRem2}.

As a consequence, we have uniform control on, say, $M_n(i)$, which can only vary over a compact subset of $\C^+$.
This step is the analog of \eqref{2.41}. Given this, we can now finish the proof of the continuity
of $F$ at $(a,w,z,J)$ as in the proof of Proposition \ref{P2.1}.

Finally, the induced map
\[
F_1 : \overline{\C^+}\times\mathbb S^1\times\mathbb S^1\times\mathcal J(C)/\!\!\sim\: \to\mathcal R_0(C)
\]
on the quotient by the equivalence relation
\[
(a, w,z,J)\sim (a,w',z',J') ,\quad a\in\R^{\infty} ,
\]
is a homeomorphism. This follows because the map is a bijection, by construction and what we already proved about the original map \eqref{5.51}.
Moreover, we just established continuity, and since we are mapping between compact metric spaces now, the continuity of the inverse is
automatic. This then also implies that the map from \eqref{5.51} is a homeomorphism since the original smaller spaces are embedded as (open)
subspaces in the larger compact spaces.
\end{proof}
\begin{Theorem}
\label{T5.3}
$\mathcal R_0(C)$ is not a manifold. More precisely, a point $H\in\mathcal R_0(C)$ has a locally Euclidean neighborhood
if and only if $H\in\mathcal R_1(C)$, or, equivalently, if and only if $H\notin\mathcal Z$.
\end{Theorem}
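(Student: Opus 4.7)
The plan is to invoke the quotient description of $\mathcal R_0(C)$ established in the proof of Theorem \ref{T5.2} and then run the local homology argument from Theorem \ref{T3.2}(b) almost verbatim. For the easy direction, if $H\in\mathcal R_1(C)$ then by Theorem \ref{T5.2} and Corollary \ref{C5.1} some open neighborhood of $H$ in $\mathcal R_0(C)$ is homeomorphic to an open subset of $\PSL(2,\R)\times\mathbb S^1\times\mathcal J(C)\cong\mathbb D\times\T^{N+2}$, so $H$ has a locally Euclidean neighborhood.

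For the other direction, suppose $H\in\mathcal Z$. The proof of Theorem \ref{T5.2} produced a homeomorphism
\[
F_1: \overline{\C^+}\times\mathbb S^1\times\mathbb S^1\times\mathcal J(C)/\!\!\sim\;\to\mathcal R_0(C) ,
\]
with $(a,w,z,J)\sim(a,w',z',J')$ whenever $a\in\R^{\infty}$, and $H$ corresponds to some $a_0\in\R^{\infty}=\partial\overline{\C^+}$. Using $\mathcal J(C)\cong\T^N$, the source is obtained from $\overline{\C^+}\times\T^{N+2}$ by collapsing, over each $a\in\partial\overline{\C^+}$, the fiber $\{a\}\times\T^{N+2}$ to a single point. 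A neighborhood $U$ of $H$ in this quotient is therefore homeomorphic to $(-1,1)\times\cone(\T^{N+2})$, where the interval factor is an arc in $\partial\overline{\C^+}$ through $a_0$ and the cone direction combines the radial coordinate of $\overline{\C^+}$ with the collapsed torus fiber. Then, as in Theorem \ref{T3.2}(b),
\begin{align*}
H_i(U, U-\{H\}) &\cong H_{i-1}(\cone(\T^{N+2}), \cone(\T^{N+2})-\{*\}) \\
&\cong \widetilde H_{i-2}(\T^{N+2}) ,
\end{align*}
which is non-zero throughout $1\le i-2\le N+2$. Since $N+2\ge 2$, this is non-zero for at least two values of $i$ (for instance $i=3$ and $i=N+4$), whereas at a point of an $n$-manifold the local homology is supported in degree $n$ only. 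Hence $H$ admits no locally Euclidean neighborhood.

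The bulk of the work has already been done in Theorem \ref{T5.2}; the only step that requires care is verifying that the local model near $a_0$ in the quotient really is $(-1,1)\times\cone(\T^{N+2})$. This reduces to showing that the collapse only affects the radial direction of $\overline{\C^+}$ while tangential motion along $\partial\overline{\C^+}$ is untouched, so that a small half-disk neighborhood of $a_0$ in $\overline{\C^+}$ times $\T^{N+2}$ quotients to an arc times a cone on $\T^{N+2}$. Once this local picture is in hand, the rest is a direct adaptation of the Dirac argument, and the extra $+2$ in the exponent automatically guarantees non-manifold behavior at every $H\in\mathcal Z$ regardless of $N$, so unlike in Theorems \ref{T3.2} and \ref{T4.2} there is no exceptional low-$N$ case in which $\mathcal R_0(C)$ happens to be a sphere.
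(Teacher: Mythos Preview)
Your proposal is correct and follows exactly the approach the paper takes: the paper's proof of Theorem~\ref{T5.3} is a single sentence pointing back to Theorem~\ref{T3.2}(b), relying on the quotient description $\mathcal R_0(C)\cong\overline{\C^+}\times\T^{N+2}/\!\!\sim$ established at the end of the proof of Theorem~\ref{T5.2}. You have spelled out what the paper leaves implicit, including the observation that $N+2\ge 2$ rules out any low-$N$ exceptional case, which is why Theorem~\ref{T5.3} has no analog of parts~(a) in Theorems~\ref{T3.2} and~\ref{T4.2}.
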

The proof of Theorem \ref{T3.2}(b) still applies here, given the identification of $\mathcal R_0(C)$ from the last part of the proof of Theorem \ref{T5.2}.

\end{document}